\newtheorem{lemma}{Lemma}[section]
\newtheorem{theorem}{Theorem}[section]
\newtheorem*{theorem*}{Theorem}
\newtheorem{corollary}{Corollary}[section]
\theoremstyle{definition}
\newtheorem{example}{Example}[section]
\numberwithin{equation}{section}
\title{Free probability of type B and asymptotics of finite-rank perturbations of random matrices}
\author{D. Shlyakhtenko}
\address{Department of Mathematics, UCLA, Los Angeles, CA 90095}
\email{shlyakht@math.ucla.edu}
\thanks{Research supported by the National Science Foundation grant DMS-1500035.}
\begin{document}
\begin{abstract}
We show that finite rank perturbations of certain random matrices fit in the framework of infinitesimal (type B) asymptotic freeness.  This can be used to explain the appearance of free harmonic analysis (such as subordination functions appearing in additive free convolution) in computations of outlier eigenvalues in spectra of such matrices.
\end{abstract}
\maketitle
\section{Introduction}
Voiculescu's free probability theory \cite{dvv:book,DVV:random} has been remarkably successful in providing a framework for understanding asymptotic behavior of families of large random matrices.  To give a concrete example, let $A(N)$ be a self-adjoint $N\times N$ matrix chosen at random (with respect to the Haar measure of this symmetric space) among all matrices with prescribed eigenvalues $\lambda^A_1(N)\leq \dots\leq \lambda^A_N(N)$ and let $B(N)$ be another independently chosen matrix with eigenvalues $\lambda_1^B(N)\leq \dots\leq \lambda^B_N(N)$.  Let $$\eta^A_N = \frac{1}{N} \sum_{j=1}^N \delta_{\lambda^A_j(N)},\qquad \eta^B_N =\frac{1}{N} \sum_{j=1}^N \delta_{\lambda^B_j(N)}$$ be their empirical spectral measures.  Assuming that $\eta^A_N \to \eta_A$ and $\eta^B_N \to \eta_B$ converge weakly to some compactly supported measures, the matrices $A(N)$ and $B(N)$ become freely independent in the limit $N\to \infty$.  This means that free probability can be used to deduce limits of empirical spectral measures of various expressions in $A(N)$ and $B(N)$, such as $A(N)B(N)A(N)$, $A(N)B(N)+B(N)A(N)$, etc.  For example, if $C(N)  = A(N)+B(N)$ and we let $$\eta^C_N = \frac{1}{N} \mathbb{E}
\left[ \sum_{\lambda \textrm{ eigenvalue of } C} \delta_\lambda \right]$$ be its empirical spectral measure, then $\eta^C_N \to \eta^C$ and $\eta^C$ is given by the free additive convolution $$\eta^C = \eta^A \boxplus \eta^B.$$  Such free convolution can be effectively computed using complex analysis tools introduced by Voiculescu.  In particular, if we denote by 
$$G_\eta(z) = \int \frac{1}{z-t} d\eta(t)$$ the Cauchy transform of the measure $\eta$, the following analytic subordination result holds:
\begin{theorem*}\cite{biane:subordination,dvv:entropy1}
Let $\eta_1, \eta_2$ be probability measures on $\mathbb R$ and let $\eta = \eta_1 \boxplus \eta_2$. Denote by $\mathbb{C}^+$ the upper half-plane $\{z\in \mathbb{C}: \operatorname{Im}(z)>0\}$.  Then there exist analytic maps $\omega_1, \omega_2 : \mathbb{C}^+\to \mathbb{C}^+$ which are uniquely determined by the following properties:\begin{itemize}
\item[(1)] $G_{\eta_1} (\omega_1(z)) = G_{\eta_2}(\omega_2(z)) = G_\eta (z)$, $z\in \mathbb{C}^+$
\item[(2)] $\omega_1(z) + \omega_2(z) = z + 1/G_\eta (z)$ 
\item[(3)] $\lim_{y\to+\infty} \omega_j (iy) / (iy) =\lim_{y\to\infty} \omega'_j(iy) =1$, $j=1,2$. \end{itemize}
\end{theorem*}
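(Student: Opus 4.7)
The natural approach is to recast conditions (1)--(2) as a fixed-point problem for the reciprocal Cauchy transforms $F_j(z) := 1/G_{\eta_j}(z)$ and $F(z) := 1/G_\eta(z)$, each of which is an analytic self-map of $\mathbb{C}^+$. Write $h_j(w) := F_j(w) - w$; by the Nevanlinna representation of the Cauchy transform of a probability measure, $h_j$ sends $\mathbb{C}^+$ into $\overline{\mathbb{C}^+}$. Conditions (1)--(2) translate into: $F_1(\omega_1(z)) = F_2(\omega_2(z)) = F(z)$ together with $\omega_2 = z + h_1(\omega_1)$, so $\omega_1(z)$ must be a fixed point of the map
$$\Phi_z(w) := z + h_2\bigl(z + h_1(w)\bigr).$$
Since $z\in\mathbb{C}^+$ is added at each stage, $\Phi_z$ is a strict analytic self-map of $\mathbb{C}^+$.

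For existence, I would apply the Denjoy--Wolff theorem. For each $z\in\mathbb{C}^+$, the iterates $\Phi_z^n(w_0)$ converge locally uniformly either to an interior fixed point $\omega_1(z)$ or to a boundary attractor in $\mathbb{R}\cup\{\infty\}$. The asymptotics $F_j(w)/w \to 1$ and $F_j(w) - w = o(|w|)$ as $w\to\infty$ (coming from $\eta_j$ being a probability measure via a standard Cauchy-transform estimate) should be used to rule out an attracting fixed point at $\infty$, forcing an interior fixed point. Analytic dependence of $\omega_1$ on $z$ then follows from the implicit function theorem applied to $\Phi_z(w) - w = 0$. Setting $\omega_2(z) := z + h_1(\omega_1(z))$ and $G(z) := 1/F_1(\omega_1(z))$, one verifies via the Nevanlinna criterion that $G$ is the Cauchy transform of a probability measure $\eta$ on $\mathbb{R}$, and identifies $\eta = \eta_1 \boxplus \eta_2$ by matching the formal $R$-transform identity $R_\eta = R_{\eta_1} + R_{\eta_2}$ (equivalently $K_\eta = K_{\eta_1} + K_{\eta_2} - 1/z$ for the formal inverses $K_{\eta_j} = G_{\eta_j}^{-1}$) on a neighborhood of $\infty$; properties (1), (2), and (3) are then built into the construction.

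For uniqueness, any pair $\omega_1, \omega_2$ satisfying (1)--(2) forces $\omega_1$ to be a fixed point of $\Phi_z$; the normalization (3) excludes a boundary fixed point and selects the interior Denjoy--Wolff attractor, so the pair is determined. The main obstacle is the Denjoy--Wolff step: one must verify that $\Phi_z$ has an attracting interior fixed point rather than degenerating to boundary behavior at $\infty$, which requires controlling the angular derivatives of $F_j$ at infinity and separately treating degenerate cases (e.g.\ $\eta_j$ a point mass, where $F_j(w) = w - m_1(\eta_j)$ and the subordination reduces to an affine translation). Biane's original proof avoids this analysis by realizing $\omega_j$ operator-algebraically as the value on a suitable resolvent of a conditional expectation onto a subalgebra of a free product von Neumann algebra; the complex-analytic fixed-point iteration of Belinschi--Bercovici sketched here is technically more elementary and exposes the intrinsic rigidity of the subordination problem.
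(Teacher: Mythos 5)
The paper does not prove this statement; it is quoted as background and attributed to Biane \cite{biane:subordination} and Voiculescu \cite{dvv:entropy1}, so there is no ``paper's own proof'' to compare against. Your sketch is a correct outline of a genuinely different argument --- the complex-analytic fixed-point proof due to Belinschi and Bercovici --- and you have identified the right reduction: writing $F_j = 1/G_{\eta_j}$, $h_j = F_j - \mathrm{id}$, recasting (1)--(2) as $\omega_1(z) = \Phi_z(\omega_1(z))$ with $\Phi_z(w) = z + h_2(z+h_1(w))$, and invoking Denjoy--Wolff. The one substantive gap you flag yourself is also the real one: ruling out a boundary (in particular $\infty$) Denjoy--Wolff point for $\Phi_z$, which one does by noting that $\operatorname{Im}\Phi_z(w) \geq \operatorname{Im} z > 0$ forces the image into a proper horodisk-free region, so $\Phi_z$ cannot have Julia--Carath\'eodory angular derivative $1$ at a boundary point; once the interior fixed point is attracting, analyticity in $z$ follows from the implicit function theorem exactly as you say. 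One smaller caveat: identifying the resulting measure as $\eta_1 \boxplus \eta_2$ via $R_\eta = R_{\eta_1} + R_{\eta_2}$ on a neighborhood of $\infty$ requires compact support; for general probability measures the $R$-transform identity holds only on a Stolz angle and one should either approximate by compactly supported measures or invoke the Bercovici--Voiculescu definition of $\boxplus$ for unbounded measures. By contrast, the two cited sources proceed operator-algebraically: Voiculescu realizes $\omega_j$ via conditional expectations onto the subalgebras generated by the free summands (in the compactly supported case), and Biane extends this to arbitrary probability measures. The operator-algebraic route yields subordination and its interpretation as a Markov kernel simultaneously; the fixed-point route is more elementary, extends directly to multiplicative and operator-valued settings, and --- as you note --- exposes the rigidity underlying uniqueness.
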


More recently, the question of finite-rank perturbation of certain random matrices has attracted considerable attention (an incomplete bibliography includes \cite{spikes,spikes0,spikes1,spikes2,spikes3,spikes4,spikes5,spikes6,spikes7,spikes8,spikes9,spikes10,spikes11,spikes12}).  To give a concrete example, let $A(N)$ be chosen at random to have eigenvalues $\Lambda$ and let $B(N)$ be a rank $1$ self-adjoint $N\times N$ matrix with the sole nonzero eigenvalue $\lambda$.  Assuming once again that $\eta^A_N \to \eta$, the question is to understand the asymptotic behavior of $C(N)=A(N)+B(N)$. Because $B(N)$ is rank $1$, its limiting distribution in the sense of free probability theory is that of the zero matrix and so there is no difference in the limit between the law of $A(N)+B(N)$ and $A(N)+0$.  Nonetheless, not only does a detailed description of the eigenvalue distribution of $C(N)$ exist, but its description surprisingly involves the subordination functions that appeared in the previous theorem \cite{spikes4,spikes}.  This strongly suggests that free independence is still involved in the description of the empirical spectral measure of $C(N)$, although perhaps in a non-trivial way.  Our main result gives an explanation of this phenomenon.

The main idea of the present paper is to examine not just the empirical law of $C(N)$ but also its $1/N$ correction (this has been already explored previously by \cite{Johanssen, Edelmann, belinschi-shlyakht:typeB}).  In other words, we consider $\eta^C_N$, the empirical spectral measure of $C(N)$ and write $$\eta^C_N = \eta_C+ \frac{1}{N} \eta_C' + o(N^{-1})$$ for a measure $\eta_C$ and a distribution $\eta_C'$.  The same can be done for matrices $A(N)$ and $B(N)$.  In our concrete example, $$\eta^A_N = \eta^A + 0 \frac{1}{N} + O(N^{-2})$$ and $$\eta^B_N = \delta_{0} + \frac{1}{N} (\delta_\lambda - \delta_0).$$ We then show that $A(N)$ and $B(N)$ are asymptotically free in the following sense.  Suppose that we replace $A(N)$ and $B(N)$ by some other variables $\hat{A}(N)$ and $\hat{B}(N)$ having the same laws as $A(N)$ and $B(N)$, respectively. Suppose that we further assume that $\hat{A}(N)$ and  $\hat{B}(N)$ are freely independent (for each $N$).  Letting $\hat{C}(N)= \hat{A}(N)+\hat{B}(N)$, our main result implies that the empirical spectral measures of $C(N)$ and $\hat{C}(N)$ are the same up to order higher than $1/N$.  In other words, $$\eta^{C}_N = \eta^A_N \boxplus \eta^B_N + o(1/N)$$ which in our concrete case becomes
$$\eta^C_N = \eta^A \boxplus \left( \delta_{0} + \frac{1}{N} (\delta_\lambda - \delta_0)\right) + o(1/N).$$  Since $\hat{A}(N)$ and $\hat{B}(N)$ are free, the computation of the law of $\hat{C}(N)$ involves free convolution, subordination functions, etc.  Thus we obtain an explanation as to why these objects are also involved in the description of the law (i.e., empirical spectral measure) of $C(N)$.

To phrase this result in terms most resembling Voiculescu's asymptotic freeness for random matrices, it is useful to consider the framework of {\em infinitesimal non-commutative probability spaces} introduced in \cite{belinschi-shlyakht:typeB} in connection with so-called type B free probability theory of Biane, Goodman and Nica \cite{BGN} (see also \cite{nica-fevrier:typeB}).  Such a space consists of a unital $*$-algebra $A$ and a pair of linear functionals $\phi,\phi':A\to \mathbb{C}$ so that $\phi(1)=1$ and $\phi'(1)=0$.  The idea behind the definition is that we might be given a family of unital linear functionals $\phi_t: A\to \mathbb{C}$ so that $\phi = \phi_{t=0}$ and $\phi' = \frac{d}{dt}\big\vert_{t=0} \phi_t$; in other words, an infinitesimal probability space captures the zeroth and first order behavior in $t$ of a family of laws $\phi_t$ on $A$.  One can then formulate a notion of freeness appropriate for this setting.  In essence, $A_1, A_2\subset A$ are infinitesimally free if Voiculescu's freeness condition holds to zeroth and first order in $t$. 
This means that one can manipulate variables form $A_1$ and $A_2$ {\em as if they were freely independent}, up to an error of higher order than $t$.  For example, the law of the sum $c= a+b$ of $a\in A_1$ and $b\in A_2$ can be described by keeping terms of order $1$ and $t$ in the free convolution of the laws of $a$ and $b$.  This was worked out in \cite{belinschi-shlyakht:typeB} where we showed that such formulas naturally involve subordination functions and their derivatives.

Returning to the matrices $A(N)$ and $B(N)$ we note that they can be viewed as infinitesimal random variables if one sets $t=N^{-1}$.   Our main result then states that these matrices are asymptotically free in the infinitesimal sense.  In particular, it follows that the appearance of subordination functions in the description of outliers is no accident: they precisely arise because of freeness (``to order $1/N$'') of $A(N)$ and $B(N)$.  Our results imply that free probability can be used to describe asymptotics of empirical spectral measures of other more complicated expressions involving random matrices and finite-rank matrices.  While the description is less precise than what is available for random matrices (where more refined eigenvalue statistics are known and one can control extremely well the positions of outliers) it is worth pointing out that our ``softer'' approach works for arbitrary polynomials in matrices and arbitrary finite rank perturbations.

The remainder of the paper is organized as follows.  We first review the necessary background on infinitesimal freeness and free probability of type B. We then show that certain types of random matrices are infinitesimally free asymptotically.  Finally, we consider examples explaining how computations with free convolution (of type B) match up with computations of spectral measure and outliers of finite-rank perturbations of unitarily invariant matrices and self-adjoint complex and real Gaussian random matrices.

\section{Infinitesimal freeness.}

An infinitesimal non-commutative probability space \cite{belinschi-shlyakht:typeB,nica-fevrier:typeB} consists of a unital $*$-algebra $A$ and a pair of linear functionals $\phi,\phi':A\to \mathbb{C}$ so that $\phi(1)=1$ and $\phi'(1)=0$.  The idea behind the definition is that we might be given a family of unital linear functionals $\phi_t: A\to \mathbb{C}$ so that $\phi = \phi_{t=0}$ and $\phi' = \frac{d}{dt}\big\vert_{t=0} \phi_t$; in other words, an infinitesimal probability space captures the zeroth and first order behavior in $t$ of a family of laws $\phi_t$ on $A$.  This notion was introduced to give an alternative interpretation of free probability of type B introduced by Biane, Goodman and Nica \cite{BGN} for purely combinatorial reasons (they wanted to obtain an analog of free probability theory in which the role of non-crossing partitions is played by the lattice of type B non-crossing partitions). 

As already considered in \cite{belinschi-shlyakht:typeB}, there is a natural structure of an infinitesimal non-commutative probability space for algebras of random matrices.  Indeed, let us assume that $A_1(N),\dots,A_k(N)$ are random matrices of size $N\times N$, $N=N_0, N_0+1,\dots$, let $A$ be the algebra of polynomials in $k$ non-commuting indeterminates and define $\tau_{N} : A\to \mathbb{C}$ by $$\tau_{N} (P) = \mathbb{E} \frac{1}{N} Tr(P(A_1(N),\dots,A_k(N))),\qquad P\in A.$$  We can then set 
\begin{eqnarray*}
\tau &=& \lim_{N\to\infty} \tau_{N} \\
\tau' &=& \lim_{N\to\infty} N (\tau_{N} - \tau)
\end{eqnarray*}
provided that these limits exist.  This has been extensively studied previously in the context of single matrix models by Johanssen \cite{Johanssen} and later by Dumitriu and Edelmann \cite{Edelmann}.

\begin{example} (i) Let $A_j(N)$ be complex Gaussian random matrices.  In this case both limits exist: $\tau$ is the free semicircle law \cite{DVV:random} and $\tau'=0$ since an easy computation shows that $\tau_N-\tau = O(1/N^2)$ (this is no longer the case for real or symplectic Gaussian matrices, cf.  \cite{Johanssen,Edelmann}).  \\
(ii) Let $F$ be a fixed $N_0\times N_0$ self-adjoint matrix with eigenvalues $\theta_1,\dots,\theta_{N_0}$ regarded as an $N\times N$ matrix by padding with zeros.  Then $\tau(P)=P(0)$ and $\tau'(P) = \sum_{j=0}^{N_0}( P(\lambda_j) - P(0))$. 
\end{example}

\subsection{Definition of infinitesimal freness}
One says that two unital subalgebras, $A_1, A_2 \subset A$ are {\em infinitesimally free} with respect to $\phi,\phi'$ if the following two conditions hold whenever $a_1,\dots,a_r\in A$ are such that $a_k \in A_{i_k}$, $i_1\neq i_2$, $i_2\neq i_3$, $\dots$ and $\phi(a_1)=\phi(a_2)=\cdots=\phi(a_n)=0$:
\begin{eqnarray*}
\phi (a_1\cdots a_r)  &=& 0; \\
\phi'(a_1\cdots a_r)  & = & \sum_{j=1^r} \phi(a_1\cdots a_{j-1} \phi'(a_j) a_{j+1}\cdots a_r).
\end{eqnarray*}
These two conditions are equivalent to the requirement that if we set $\phi_t = \phi + t \phi'$, then for any $a_k \in A_{i_k}$ so that $i_1\neq i_2$, $i_2\neq i_3$, $\dots$, we have that $\phi_t( (a_1 - \phi_t(a_1)) \cdots (a_r - \phi_t(a_r))) = o(t)$, i.e., $A_1$ and $A_2$ are ``free to zeroth and first order''. Note that the first condition simply states that $A_1$ and $A_2$ are (in the usual sense) free with respect to $\phi$. 

\subsection{A criterion for infinitesimal freeness}

The following lemma will be useful in checking asymptotic infinitesimal freeness.

\begin{lemma}\label{lemma:infFreeConditions}
Assume that $a_1,\dots,a_n, e_1,\dots,e_m$ are elements of an infinitesimal probability space $(A,\phi,\phi')$ with the following properties: \begin{itemize}
\item[(i)] For any $p$ in the non-unital algebra generated by $e_1,\dots,e_m$, $\phi(p)=0$.  
\item[(ii)] $\phi(c_1\dots c_r) = \phi(c_r c_1 \dots c_{r-1})$ and $\phi'(c_1 \dots c_r) = \phi'(c_r c_1 \dots c_{r-1})$ for any $r$ and any $c_1,\dots,c_r \in A$. \end{itemize}

Then $(a_1,\dots,a_n)$ and $(e_1,\dots,e_m)$ are infinitesimally free iff for any $E_j $ in the non-unital algebra generated by $e_1,\dots,e_m$ and $Q_j \in \operatorname{Alg}(a_1,\dots,a_n)$ so that $\phi(Q_j)=0$ and any $r\geq 1$, \begin{itemize}
\item[(a)] $\phi (E_1 Q_1 E_2 Q_2 \dots E_r Q_r) = 0$ and 
\item[(b)] $
\phi'(E_1 Q_1 E_2 Q_2 \dots E_r Q_r) = 0$.
\end{itemize}
\end{lemma}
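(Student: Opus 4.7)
The core idea is to use the traciality hypothesis (ii) to cyclically rearrange any alternating centered product into the specific pattern $E_1 Q_1 E_2 Q_2 \cdots E_r Q_r$ to which (a) and (b) apply, absorbing any extraneous adjacencies produced along the way via two complementary operations: two adjacent $E$-factors multiply to another element of the non-unital algebra generated by $e_1,\dots,e_m$, which is again centered by hypothesis (i); two adjacent centered $Q$-factors split as $Q_iQ_j = \phi(Q_iQ_j)\cdot 1 + \bigl(Q_iQ_j - \phi(Q_iQ_j)\cdot 1\bigr)$, producing a scalar term plus a shorter centered word.

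For the ``only if'' direction, assume infinitesimal freeness. Then (a) is exactly the ordinary freeness of $\phi$ applied to the alternating centered word $E_1 Q_1 \cdots E_r Q_r$ (centered because $\phi(E_j)=0$ by (i) and $\phi(Q_j)=0$ by hypothesis). For (b), apply the infinitesimal freeness identity
\[
\phi'(E_1 Q_1 \cdots E_r Q_r) \;=\; \sum_{j} \phi'(x_j)\, \phi\bigl(E_1 Q_1 \cdots \widehat{x_j} \cdots E_r Q_r\bigr),
\]
where $x_j$ runs over the $2r$ factors. Removing any single factor joins two like-typed neighbors; merging them (for $E$'s, using (i)) or splitting off their scalar part (for $Q$'s) reduces each $\phi\bigl(\cdots \widehat{x_j}\cdots\bigr)$ to a finite linear combination of $\phi$'s of strictly shorter alternating centered words, all of which vanish by ordinary freeness. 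Hence the sum is zero and (b) follows.

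For the ``if'' direction, one first upgrades (a) to ordinary freeness of $(a_j)$ and $(e_j)$, namely $\phi = 0$ on every alternating centered product. This proceeds by induction on word length, distinguishing the four possible patterns according to starting and ending types: the pattern $EQ\cdots EQ$ is precisely (a); the pattern $QE\cdots QE$ rotates via the cyclicity in (ii) directly into the previous case; the pattern $EQ\cdots QE$ (both ends $E$) rotates so that two $E$'s become adjacent, and merging them by (i) yields a strictly shorter word of pattern $EQ\cdots EQ$; the pattern $QE\cdots EQ$ (both ends $Q$) rotates so that two $Q$'s become adjacent, and splitting them produces a scalar multiple of a strictly shorter pattern-$EQ\cdots QE$ word plus a pattern-$QE\cdots QE$ word at length one less, both handled by the induction.

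To complete the infinitesimal freeness, one must verify
\[
\phi'(a_1 \cdots a_\ell) \;=\; \sum_{j} \phi'(a_j)\, \phi(a_1 \cdots \widehat{a_j} \cdots a_\ell)
\]
for every alternating centered word $a_1\cdots a_\ell$. The right-hand side vanishes by the ordinary freeness just proved combined with the same merge/split reduction used in the ``only if'' direction. For the left-hand side one runs the identical four-pattern induction, now with $\phi'$ in place of $\phi$, invoking the cyclicity of $\phi'$ from (ii), using (b) as the base case in place of (a), and (when splitting a $QQ$ adjacency in the last pattern) using the ordinary freeness of $\phi$ established in the previous step to dispose of the scalar contribution. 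The principal technical point is the bookkeeping of these four patterns and the parallel inductions for $\phi$ and $\phi'$; once the merge and split operations are in place the reductions run identically in both cases.
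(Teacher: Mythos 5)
The ``only if'' direction of your argument is correct, and your reduction to alternating centered words via the merge/split operations and cyclicity is the right machinery; the same mechanism is used by the paper to establish ordinary freeness of $\phi$ from (a) and (ii). However, the ``if'' direction contains a genuine error: you claim that for every alternating centered word $a_1\cdots a_\ell$ both sides of the infinitesimal freeness identity $\phi'(a_1\cdots a_\ell)=\sum_j\phi'(a_j)\,\phi(a_1\cdots\widehat{a_j}\cdots a_\ell)$ vanish, and then argue this term by term. This is false already for the word $Q_1EQ_2$ with $E$ from the $e$-algebra and $Q_1,Q_2$ centered in $\operatorname{Alg}(a_1,\dots,a_n)$. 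There the right-hand side reduces to $\phi'(E)\,\phi(Q_1Q_2)$ (the other two summands vanish because $\phi(EQ_2)=\phi(Q_1E)=0$), and this is generally nonzero since $Q_1Q_2$ is not centered. On the left-hand side, your own reduction --- cycle to $\phi'(EQ_2Q_1)$, split $Q_2Q_1=\phi(Q_2Q_1)\cdot 1 + (Q_2Q_1)^\circ$, apply (b) to the centered remainder --- yields exactly $\phi(Q_2Q_1)\phi'(E)$, which by traciality equals the right-hand side; it does not vanish, and the scalar term cannot be ``disposed of'' by ordinary freeness of $\phi$ because what is left inside is $\phi'(E)$, not $\phi(E)$. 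So the identity holds, but not by showing each side is zero; your induction actually produces matching nonzero scalar residues on the two sides, and you would need to track and compare them rather than discard them.

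The paper sidesteps this bookkeeping entirely with a uniqueness argument: condition (b), together with (i), (ii) and the restrictions of $\phi'$ to the two subalgebras, recursively determines $\phi'$ on the whole algebra; the infinitesimal freeness recursion likewise determines a unique functional with those restrictions; and the ``only if'' direction (which you proved correctly) shows the latter satisfies (b), so the two determinations must coincide. That reduces the entire ``if'' direction to the computation you already did in the ``only if'' direction. If you want to keep your direct approach, the fix is to show that the four-pattern reduction applied to the left-hand side and the term-by-term reduction of the right-hand side yield the same combination of scalar contributions, rather than asserting both vanish; the $QEQ$ case above is the model for what must be matched.
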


\begin{proof}
We first note that $(a_1,\dots,a_n)$ and $(e_1,\dots,e_m)$ are free with respect to $\phi$.  Indeed, this follows right away from condition (a) and assumption (iii). 

Next, we note that assumption (b) recursively determines $\phi'$ on all of the algebra generated by $e_1,\dots,e_m$ and $a_1,\dots,a_n$.  Thus it is sufficient to check that (b) holds under the assumption that infinitesimal freeness holds.

In that case, 
\begin{equation}\label{eq:phiPrime}
\begin{aligned}
\phi'(E_1 Q_1 E_2 Q_2 \dots E_r Q_r) &= \sum_j \phi(E_1 Q_1 \dots E_{j-1} Q_{j-1} \phi'(E_j) Q_{j} E_{j+1}Q_{j+1} \dots E_{n}Q_n )
\\ & \qquad + \sum_{j} \phi(E_1 Q_1 \dots E_{j} \phi'(Q_j) E_{j+1} \dots E_n Q_n).
\end{aligned}
\end{equation}  
The terms $\phi(E_1 Q_1 \dots E_{j} \phi'(Q_j) E_{j+1} \dots E_n Q_n) = \phi'(Q_j) \phi(E_1 Q_1 \dots E_{j} E_{j+1} \dots E_n Q_n)$ are zero by freeness, since by assumption $\phi(E_j E_{j+1})=0$.  Thus the second sum in \eqref{eq:phiPrime} is  zero.  

Returning to the first term, 
if $r> 1$, we have that 
\begin{multline*}
\phi(E_1 Q_1 \dots E_{j-1} Q_{j-1} \phi'(E_j) Q_{j} E_{j+1}Q_{j+1} \dots E_{n}Q_n ) = \\
\phi'(E_j) \phi(E_1 Q_1 \dots E_{j-1} (Q_{j-1}  Q_{j}-\phi(Q_{j-1}Q_j)) E_{j+1}Q_{j+1} \dots E_{n}Q_n )
 \\ + \phi (Q_{j-1}  Q_{j}) \phi'(E_j) \phi(E_1 Q_1 \dots E_{j-1}  E_{j+1}Q_{j+1} \dots E_{n}Q_n )\end{multline*}
which is zero by assumption (a), since at least one term in the monomial comes from the non-unital subalgebra generated by $e_1,\dots,e_m$.  If $r=1$, we have 
$$ \phi'(E_1 Q_1) = \phi'(E_1) \phi(Q_1)=0$$ by our assumption that $\phi(Q_j)=0$.
\end{proof}

\section{Asymptotic infinitesimal freeness for random matrices}

We begin by describing two ensembles considered in the present paper, the unitary invariant ensemble having deterministic eigenvalues and complex and real Gaussian ensembles.  We prove a technical lemma for each of them.  We then state and prove our main result in \S\ref{sec:AsymptFree}.

Let us denote by $E_{ij}$ the matrix all of whose entries are zero, except that the $i,j$-th entry is $1$.

\subsection{Unitarily invariant matrices.}  \label{sec:notation} Let $A_1(N),\dots,A_k(N)$ be self-adjoint $N\times N$ random matrices of the form $A_j(N)=U \Lambda_j(N) U^*$ where $\Lambda_1(N),\dots,\Lambda(N)_k$ are deterministic matrices and  $U$ is an $N\times N$ unitary matrix randomly chosen with respect to the Haar measure.  We require  that  for any polynomial $q$, $N^{-1} \mathbb{E} Tr(q(A_1(N),\dots,A_k(N)))$  converges as $N\to\infty$ to a limit denoted by $\tau(q)$.  We  moreover assume that $\sup_{j,N} \Vert A_j(N)\Vert_\infty <\infty$. 

\begin{lemma}\label{lemma:limit} With the notation and assumptions of \S\ref{sec:notation},
for any polynomials $q_1,\dots,q_r$ in $k$ indeterminates,
\begin{gather*} 
\lim_{N\to\infty} \mathbb{E} Tr ( E_{i_r j_1} q_1(A_1(N),\dots,A_k(N))  E_{i_1 j_2} q_2(A_1(N),\dots,A_k(N))E_{i_2 j_3} \times \\ \cdots \times E_{i_{r-1} j_r} q_r(A_1(N),\dots,A_k(N)))
= \prod_{s=1}^r \delta_{j_s = i_s}\tau(q_s) 
\end{gather*} 
\end{lemma}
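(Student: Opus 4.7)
The plan is to reduce the trace to a product of matrix entries using the rank-one structure of the $E_{i,j}$'s, then compute the expectation via the Weingarten calculus for the Haar unitary $U$. Applying $E_{a,b}\, X\, E_{c,d} = X_{b,c}\, E_{a,d}$ repeatedly (together with the cyclicity of the trace) collapses the entire expression:
\[
\operatorname{Tr}\Bigl(E_{i_r j_1} q_1(A) E_{i_1 j_2} q_2(A) \cdots E_{i_{r-1} j_r} q_r(A)\Bigr) = \prod_{s=1}^r \bigl(q_s(A)\bigr)_{j_s,\, i_s}.
\]
Because unitary conjugation is a $*$-algebra homomorphism, $q_s(A_1(N),\dots,A_k(N)) = U Q_s U^*$ with $Q_s := q_s(\Lambda_1(N),\dots,\Lambda_k(N))$ deterministic, so the task becomes computing $\mathbb{E}\prod_{s=1}^r (U Q_s U^*)_{j_s,\,i_s}$.

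Expanding each entry as $\sum_{a_s,b_s} U_{j_s a_s}(Q_s)_{a_s b_s}\overline{U_{i_s b_s}}$ and invoking the Weingarten formula yields
\[
\mathbb{E}\prod_s (UQ_s U^*)_{j_s,i_s} = \sum_{\sigma,\pi\in S_r} \operatorname{Wg}(\sigma\pi^{-1},N)\, \Bigl(\prod_s \delta_{j_s,\, i_{\sigma(s)}}\Bigr)\, T_\pi(Q_1,\dots,Q_r),
\]
where $T_\pi$ is the product, over cycles $c$ of $\pi$, of cyclically-ordered traces $\operatorname{Tr}\bigl(\prod_{s\in c} Q_s\bigr)$; in particular $T_e = \prod_s \operatorname{Tr}(Q_s)$.

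The next step is to invoke the Collins--\'Sniady asymptotics $|\operatorname{Wg}(\rho,N)| = O(N^{-r-|\rho|})$ (with the word length $|\rho| = r - \#\mathrm{cycles}(\rho)$ on $S_r$), together with the bound $|T_\pi| = O(N^{\#\mathrm{cycles}(\pi)}) = O(N^{r - |\pi|})$, which is uniform in $N$ since $\sup_{j,N}\|A_j(N)\|_\infty < \infty$ bounds every $\|Q_s\|_\infty$. Each summand is therefore $O(N^{-|\pi|-|\sigma\pi^{-1}|})$, and by the triangle inequality $|\pi|+|\sigma\pi^{-1}|\geq|\sigma|$; so for $\sigma\neq e$ the contribution is $O(N^{-1})$ and drops out of the limit. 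For $\sigma = e$ the exponent becomes $-2|\pi|$, and only $\pi = e$ survives, producing
\[
\operatorname{Wg}(e,N)\,\Bigl(\prod_s \delta_{j_s,i_s}\Bigr)\prod_s \operatorname{Tr}(Q_s) = \bigl(1+O(N^{-2})\bigr)\Bigl(\prod_s \delta_{j_s,i_s}\Bigr)\prod_s \tau_N(q_s),
\]
which converges to $\prod_s \delta_{j_s,i_s}\,\tau(q_s)$ by the assumed convergence $\tau_N \to \tau$.

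The main obstacle will be the Weingarten bookkeeping: one must verify that the triangle-inequality estimate really dominates every subleading term without delicate cancellations, and that the $T_\pi$ bound is valid even when some $\tau(q_s)$ vanish (it is, because we only need an $O(N^{\#\mathrm{cycles}(\pi)})$ upper bound, not a precise asymptotic). With these estimates in hand the proof is essentially the extraction of a single leading Weingarten term.
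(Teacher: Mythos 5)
Your proof is correct, but it takes a genuinely different route from the paper's. After the common first step of collapsing the trace to $\prod_{s=1}^r (q_s(A))_{j_s,i_s}$, the paper avoids Weingarten calculus entirely: it first shows, by unitary invariance and writing $\mathbb{E}[q_s(A)]_{a,b}$ as $\mathbb{E}\tfrac1N Tr(q_s(A)\Sigma(a,b))$ for the circulant matrix $\Sigma(a,b)=\sum_u E_{a+u,b+u}$, that a \emph{single} entry satisfies $\mathbb{E}[q_s(A)]_{a,b}\to \tau(q_s)\delta_{a=b}$; it then observes that $U\mapsto [q_s(U\Lambda U^*)]_{a,b}$ is Lipschitz with a constant bounded uniformly in $N$ (using $\sup_{j,N}\|A_j(N)\|_\infty<\infty$), so that concentration of Haar measure on $U(N)$ forces $\mathbb{E}\prod_s[\cdots]_{j_s,i_s}=\prod_s\mathbb{E}[\cdots]_{j_s,i_s}$ in the limit. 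Your Weingarten argument replaces the concentration step with the Collins--\'Sniady asymptotics $\mathrm{Wg}(\rho,N)=O(N^{-r-|\rho|})$ plus the Cayley-metric triangle inequality $|\pi|+|\sigma\pi^{-1}|\geq|\sigma|$ to kill every summand except $\sigma=\pi=e$; this is a clean and complete alternative, and it even hands you an explicit $O(N^{-1})$ error rate that the soft concentration argument does not yield for free. The trade-off is that the paper's concentration route is more robust and in spirit closer to what is needed elsewhere (e.g.\ it would transfer readily to other invariant ensembles), whereas the Weingarten route is strictly tied to Haar unitary conjugation; also note you should state explicitly that you use $Tr(Q_s)=N\tau_N(q_s)$ (valid since $Tr$ is conjugation-invariant so $q_s(\Lambda)$ and $q_s(A)$ have the same trace) and that the $T_\pi$ bound $|T_\pi|=O(N^{\#\mathrm{cycles}(\pi)})$ is uniform in $N$ because of the uniform operator-norm bound on the $\Lambda_j(N)$.
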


\begin{proof} We note that 
\begin{gather*}
\mathbb{E}Tr ( E_{i_r j_1} q_1(A_1(N),\dots,A_k(N)) E_{i_1 j_2} q_2(A_1(N),\dots,A_k(N))E_{i_2 j_3}\times \\ \qquad \qquad \qquad \cdots\times  E_{i_{r-1} j_r} q_r(A_1(N),\dots,A_k(N))) 
= \mathbb{E} \prod_{s=1}^r [ q_s(A_1(N),\dots,A_k(N))]_{j_s, i_s} .
\end{gather*}
Because of unitary invariance, $$\mathbb{E}[q_s(A_1(N),\dots,A_k(N))]_{a,b} = \mathbb{E}\frac{1}{N} \sum_{u=1}^N [q_s(A_1(N),\dots,A_k(N))]_{a+u,b+u},$$ where the addition of indices is modulo $N$.   If we denote by $\Sigma(a,b)$ the matrix $\sum_{u} E_{a+u,b+u}$ (addition mod $N$), we further have 
$$\mathbb{E} q_s(A_1(N),\dots,A_k(N))]_{a,b} = \mathbb{E} \frac{1}{N} Tr(q_s(A_1(N),\dots,A_k(N))\Sigma(a,b)).$$
By \cite{DVV:random} unitary invariance implies that $$\lim_{N\to\infty} \mathbb{E} \frac{1}{N} Tr(q_s(A_1(N),\dots,A_k(N))\Sigma(a,b))=\tau(q) \lim_{N\to\infty} Tr(\Sigma(a,b)) = \tau(q_s)\delta_{a=b}.$$  

Let $\Vert\cdot\Vert_{HS}$ denote the Hilbert-Schmidt norm on $N\times N$ matrices given by $$\Vert B\Vert_{HS} = (Tr(B^*B))^{1/2} = (\sum_{ij=1}^N |B_{ij}|^2)^{1/2}.$$  Note that  $$[q_s(U\Lambda_1(N)U^*,\dots,U\Lambda_k(N)U^*)]_{a,b} 
 = [Uq_s(\Lambda_1(N),\dots,U\Lambda_k(N))U^*]_{a,b} = [ U Q U^*]_{a,b}$$ where $Q=q_s(\Lambda_1(N),\dots,\Lambda_k(N))$ is bounded in the operator norm.  It follows that if $U'$ is another unitary, then
 \begin{eqnarray*}
 \big|[ U Q U^*]_{a,b} - [ U' Q (U')^*]_{a,b} \big | &\leq &
\Vert  U Q U^* - U' Q (U')^* \Vert_{HS} 
\\ &\leq & \Vert (U-U') QU^*\Vert_{HS} + \Vert U' Q(U-U')^* \Vert_{HS}\\ 
&\leq & 2\Vert U-U'\Vert_{HS} \Vert Q\Vert_{\infty}.\end{eqnarray*}
Thus the function $U\mapsto  [q_s(U\Lambda_1(N)U^*,\dots,U\Lambda_k(N)U^*)]_{a,b} $ is Lipschitz (of bounded constant) as a function on the space $N\times N$ unitaries with the Hilbert-Schmidt norm coming from the non-normalized trace on matrices.  Therefore, because of concentration (see e.g. \cite[Chapter 6]{guionnet:randomBook})
$$
\mathbb{E} \prod_{s=1}^r [ q_s(A_1(N),\dots,A_k(N))]_{j_s, i_s} = \prod_{s=1}^r \mathbb{E} [ q_s(A_1(N),\dots,A_k(N))]_{j_s, i_s},$$ which concludes the proof.
\end{proof}

\subsection{Gaussian random matrices}  \label{sec:GaussianNotations} 
Let $A_1(N),\dots,A_k(N)$ be self-adjoint $N\times N$ random matrices with entries $( [A_r(N)]_{i,j} : 1\leq r\leq k, 1\leq i\leq N)$  independent random variables each of variance $ (1+\delta_{ij}) N^{-1/2}$, and so that $[A_r(N)]_{i,j}$ are real or complex Gaussian if $i\neq j$ and real Gaussian if $i=j$.    The following is an analog of Lemma~\ref{lemma:limit}; one could prove it using concentration and the fact that maximal and minimal eigenvalues of real or complex Gaussian matrices are bounded with high probability.  However, we prefer to give a direct combinatorial proof.  A version of this proof also works for squares of rectangular matrices.

\begin{lemma}\label{lemma:limitGaussian} Let $A_1(N),\dots,A_k(N)$ be as in  \S\ref{sec:GaussianNotations},
for any polynomials $q_1,\dots,q_r$ in $k$ indeterminates,
\begin{gather*} 
\lim_{N\to\infty} \mathbb{E} Tr ( E_{i_r j_1} q_1(A_1(N),\dots,A_k(N))  E_{i_1 j_2} q_2(A_1(N),\dots,A_k(N))E_{i_2 j_3} \times \\ \cdots \times E_{i_{r-1} j_r} q_r(A_1(N),\dots,A_k(N)))
= \prod_{s=1}^r \delta_{j_s = i_s}\tau(q_s) 
\end{gather*} 
\end{lemma}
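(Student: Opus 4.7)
I would prove this in parallel with Lemma~\ref{lemma:limit}, but replace the concentration / unitary-invariance step with a direct Wick / genus-expansion computation (the author's stated preference for a ``combinatorial proof''). First, exactly as in Lemma~\ref{lemma:limit}, the identity $Tr(E_{a,b}X)=X_{b,a}$ gives
$$
Tr(E_{i_r j_1} q_1 E_{i_1 j_2} q_2 \cdots E_{i_{r-1} j_r} q_r) = \prod_{s=1}^r [q_s(A_1(N),\dots,A_k(N))]_{j_s,i_s},
$$
so it suffices to show $\mathbb{E}\prod_{s=1}^r[q_s]_{j_s,i_s}\to\prod_{s=1}^r \delta_{j_s=i_s}\tau(q_s)$. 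By multilinearity I may assume each $q_s$ is a monomial $A_{r_{s,1}}\cdots A_{r_{s,n_s}}$, and expanding in matrix indices writes $[q_s]_{j_s,i_s}$ as a sum over paths $\ell^{(s)}_0=j_s,\ell^{(s)}_1,\dots,\ell^{(s)}_{n_s}=i_s$ of products of Gaussian entries. Taking expectation and applying Wick's formula produces a sum over pairings $\pi$ of the $\sum_s n_s$ occurrences of the $A$'s (only matching color labels may be paired), each pair contributing a factor of $(1+\delta)/N$ together with the corresponding Kronecker identifications on the path indices.

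Next, I would invoke the standard topological accounting for such Wick sums. Each pairing $\pi$ is interpreted as a ribbon graph in which the $r$ words form $r$ open strands with marked endpoints $(j_s,i_s)$, and each pair is a ribbon joining two sides. The total contribution scales as $N^{\chi(\pi)-r}$, where $\chi(\pi)$ counts free-index summations (faces) minus pairs (edges), plus a fixed overhead; equivalently, it is controlled by the Euler characteristic of the associated surface. The key combinatorial claim is that $\chi(\pi)\leq r$, with equality precisely when $\pi$ decomposes as a disjoint union of non-crossing pairings within each strand and the endpoint constraints close up, forcing $j_s=i_s$ for every $s$. Under such ``non-crossing, non-mixing'' $\pi$, the sum over internal indices on the $s$th strand is precisely the planar Wick computation of $\mathbb{E}\frac{1}{N}Tr(q_s(A(N)))$, which converges to $\tau(q_s)$; multiplying over $s$ produces the claimed limit $\prod_s \delta_{j_s=i_s}\tau(q_s)$, while any crossing or mixing pair strictly drops $\chi$ and contributes only $O(1/N)$.

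In the real Gaussian case each Wick contraction receives an additional ``transpose'' term, corresponding to a non-orientable ribbon gluing; non-orientable gluings always strictly decrease $\chi$ and so also belong to the error. The main obstacle is the topological bookkeeping that shows $\chi(\pi)\leq r$ with equality exactly under the stated conditions, and that the surviving planar sum indeed matches $\prod_s \tau(q_s)$. This is the classical Gaussian genus-expansion argument and can be handled either by induction on the number of pairs (removing an outermost pair and tracking the change in the number of faces) or, more transparently, by closing the $r$ open strands into a single cyclic trace using $r$ auxiliary deterministic ``bridge'' matrices and invoking the standard closed-trace genus expansion; the rectangular-matrix extension mentioned by the author arises by replacing the closed-trace identity with its rectangular analog.
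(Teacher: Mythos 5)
Your approach is the same Wick/genus expansion the paper uses, but the bookkeeping is cast differently and one key count is left at sketch level. The paper closes the $r$ strands into a single trace from the start, so the ribbon graph has one vertex, $d/2$ edges, and $F$ faces; the power of $N$ is then $F-d/2-f$ where $f\ge 1$ is the number of \emph{fixed} faces (faces whose boundary touches one of $j_1,\dots,j_r,i_1,\dots,i_r$). Euler's formula gives $F-d/2\le 1$, so the leading terms are exactly those with genus $0$ and $f=1$, and $f=1$ immediately forces all the distinguished half-edges to lie on a single face; this yields the factorization $\mathbb{E}\prod_s[q_s]_{j_s,i_s}=\prod_s\mathbb{E}[q_s]_{j_s,i_s}$, after which a separate invariance argument (averaging over cyclic shifts) gives $\mathbb{E}[q_s]_{j_s,i_s}\to\delta_{j_s=i_s}\tau(q_s)$. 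Your version keeps the $r$ strands open and asserts the bound $\chi(\pi)\le r$ with equality exactly for non-crossing, non-mixing pairings; that statement is correct but is not the standard closed-trace genus formula, and you never pin down what ``$\chi$'' counts for an open-strand diagram (how the marked endpoints enter the face count). You explicitly flag this as the main obstacle and propose closing the strands with bridge matrices — which is, in effect, exactly the paper's setup — so the two proofs converge once that is done. The only other difference is in the endgame: you directly re-derive $\tau(q_s)$ from the planar Wick sum on each strand, while the paper instead factorizes and then reuses the one-strand limit; both are fine. In short, same method; your write-up would need either to carry out the open-strand face/Euler count explicitly, or to commit to the closed-trace formulation (free vs.\ fixed faces, $f\ge 1$), which is the cleanest way to make $\chi(\pi)\le r$ rigorous.
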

\begin{proof}
Let $q_1,\dots,q_r$ be monomials, so that 
\begin{eqnarray*}
[q_p (A_1(N),\dots,A_k(N))]_{a,b} &=& \sum_{J=(j_1,\dots,j_{d_p-1})} [A_{u_1}(N)]_{a,j_1} [A_{u_2}(N)]_{j_1,j_2} \cdots [A_{u_{d_p}}(N)]_{j_{d_p-1},b} \\
&=& \sum_{J} A_p(a,J,b).
\end{eqnarray*}

 It follows that with this notation, 
 \begin{align*}
Tr (&E_{i_r j_1}  q_1(A_1(N),\dots,A_k(N))  E_{i_1 j_2} q_2(A_1(N),\dots,A_k(N))E_{i_2 j_3} \times  \\ & \qquad \cdots \times  E_{i_{r-1} j_r} q_r(A_1(N),\dots,A_k(N))) 
\\  & = 
 \sum_{J_1,\dots,J_r} \mathbb{E}\big(A_1(j_1,J_1,i_1) A_2(j_2,J_2,i_2) \cdots A_r(j_r, J_r, i_r)\big).
 \end{align*}

By a standard argument involving the  Wick formula (see e.g. \cite[Chapter 1]{guionnet:randomBook}), taking into account the value of the variance of $[A_k(N)]_{ij}$, we see that each term $$\mathbb{E}\big(A_1(j_1,J_1,i_1) A_2(j_2,J_2,i_2) \cdots A_r(j_r, J_r, i_r)\big)$$ has either value $0$ or $N^{d/2}$ where $d$ is the sum of the degrees of the monomials $q_1,\dots,q_r$.  Moreover, nonzero terms are in one-to-one correspondence to labeled triangulations $\mathscr{T}$ of an orientable two-dimensional surface having one zero-cell and $d$ one-cells (edges), and a certain number of faces.  The labeling is as follows. Number the half-edges emanating from the zero-cell from $1$ to $d$.  Then label (clockwise) the top and bottom of each half-edge by indices from the ordered set $\{j_1\} \cup J_1 \cup \{i_1\} \cup \{j_2\} \cup J_2 \cup \cdots \cup J_r \cup \{i_r\}$.  The associated term is zero unless indices agree along every edge (although even some terms satisfying this requirement may be non-zero; there are further requirements involving matching various terms from the monomials $q_1,\dots,q_r$ which we will for now ignore). 

 We see from this structure that for any face $F$ the following must hold.  Either the boundary edges of $F$ are not labeled by any of $j_1,\dots,j_r,i_1,\dots,i_r$, in which case the indices along the boundary of that face must all agree but can have arbitrary value (and we call such a face {\em free}); or at least one of the the indices along the boundary is one of $j_1,\dots,j_r,i_1,\dots,i_r$, in which case all the indices along the boundary of the face must have certain fixed values (which we call {\em fixed faces}).  Thus
$$ \sum_{J_1,\dots,J_r} \mathbb{E}\big(A_1(j_1,J_1,i_1) A_2(j_2,J_2,i_2) \cdots A_r(j_r, J_r, i_r)\big) = \sum_{\mathscr{T}} C_\mathscr{T} N^{-d/2} N^{F(\mathscr{T})-f(\mathscr{T})}$$
where $C_\mathscr{T}$ is a constant depending only on $\mathscr{T}$, $F(\mathscr{T})$ refers to the total number of faces and $f(\mathscr{T})$ to the number of fixed faces in a triangulation $\mathscr{T}$. Since the number of edges in the triangulation is $d/2$ (because the number of half-edges is $d$), Euler's formula for genus gives us the inequality $$ 1 - d/2 + F(\mathscr{T}) \leq 2$$ so that $$F(\mathscr{T}) -d/2 -1 \leq 0.$$  Since there must be at least one fixed face, $f(\mathscr{T})\geq 1$ and thus $$\lim_{N\to\infty} N^{F(\mathscr{T}) - d/2 - f(\mathscr{T})} $$  is zero unless $f(\mathscr{T})=1$ and the genus is $2$.

If $f(\mathscr{T})=1$, it must be that all half-edges labeled by  $j_1,\dots,j_r,i_1,\dots,i_r$  bound the same face. But this means that $$
\mathbb{E} \prod_{s=1}^r [ q_s(A_1(N),\dots,A_k(N))]_{j_s, i_s} = \prod_{s=1}^r \mathbb{E} [ q_s(A_1(N),\dots,A_k(N))]_{j_s, i_s}.$$  It remains to note that by unitary invariance and \cite{DVV:random} $$\mathbb{E} [ q_s(A_1(N),\dots,A_k(N))]_{j_s, i_s} = \frac{1}{N} {Tr} (q_s(A_1(N),\dots,A_k(N)) \Sigma(j_s,i_s))\to \delta_{j_s,i_s} \tau(q_s)$$  where $\tau$ is the free semicircle law and $\Sigma(a,b)$ is the matrix $\sum_{u} E_{a+u,b+u}$ (addition of indices mod $N$).  
\end{proof}

A similar result also holds for matrices of the form $A(N)^*A(N)$ where $A(N)$ is a random Gaussian matrix of size $N\times M(N)$ and $0<\lim_{N\to\infty} M(N)/N <\infty$ normalized so that each column has total variance $1$.  We leave details to the reader.

\subsection{Asymptotic infinitesimal freeness for random matrices} \label{sec:AsymptFree}  We are now ready to state the main result of our paper.
\begin{theorem} \label{thm:asymtinffree}
Let $A_1(N),\dots,A_k(N)$ be the random matrix ensemble described in either \S\ref{sec:notation} or \S\ref{sec:GaussianNotations}.  Then for any polynomial $p$ in variables $a_1,\dots,a_k$ and $(e_{ij}:1\leq i,j\leq N_0)$  the following limits exist: $$\tau(p) = \lim_{N\to\infty} \frac{1}{N} \mathbb{E} Tr\big(p(A_1(N),\dots,A_k(N),\{E_{ij}\}_{i,j=1}^{N_0})\big)$$  
$$\tau'(p)=\lim_{N\to \infty}  \mathbb{E}Tr\big(p(A_1(N),\dots,A_k(N),\{E_{ij}\}_{i,j=1}^{N_0})\big)-N \tau (q).$$   Furthermore, the variables $(a_1,\dots,a_n) $ and $(e_{ij} : 1\leq i,j \leq N)$ are infinitesimally free with respect to $(\tau, \tau')$.
\end{theorem}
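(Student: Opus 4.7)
The plan is to apply Lemma~\ref{lemma:infFreeConditions} with $\phi=\tau$, $\phi'=\tau'$, and the two generating tuples $(a_1,\dots,a_k)$ and $(e_{ij})_{1\leq i,j\leq N_0}$. First I would use the matrix-unit relation $E_{ij}E_{kl}=\delta_{jk}E_{il}$ to collapse every monomial in $p$ into (a scalar multiple of) an alternating word
\[
E_{\alpha_1\beta_1}\,Q_1\,E_{\alpha_2\beta_2}\,Q_2\cdots E_{\alpha_r\beta_r}\,Q_r
\]
(with $Q_s$ polynomials in the $a_j$'s), or else a pure polynomial $Q_0$ in the $a_j$'s; cyclicity of the trace permits absorbing any leading or trailing $A$-factor into some $Q_s$.

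For such a mixed word, Lemma~\ref{lemma:limit} (unitarily invariant case) or Lemma~\ref{lemma:limitGaussian} (Gaussian case) gives
\[
\lim_{N\to\infty}\mathbb{E}\,Tr\bigl(E_{\alpha_1\beta_1}Q_1\cdots E_{\alpha_r\beta_r}Q_r\bigr)=\prod_{s=1}^{r}\delta_{\beta_s=\alpha_{s+1\bmod r}}\,\tau(Q_s),
\]
which is $O(1)$ in $N$. Dividing by $N$ and passing to the limit shows $\tau=0$ on every word containing a matrix unit, while the unscaled limit is precisely the value of $\tau'$ on the same word. On pure $A$-monomials $Q_0$ the existence of $\tau(Q_0)$ is part of the hypothesis of \S\ref{sec:notation} (and standard in the Gaussian setting), and existence of $\tau'(Q_0)$ is furnished by Example~2.1 (complex Gaussian yields $0$) or, in the other cases, by the known $1/N$-expansions cited there.

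To conclude via Lemma~\ref{lemma:infFreeConditions}: condition (i) holds because any polynomial in $\{E_{ij}\}$ without a constant term is a finite-rank matrix, so $\tau$ of it vanishes; condition (ii), traciality of $\tau$ and $\tau'$, is inherited from the tracial nature of $N^{-1}\mathbb{E}\,Tr$. For (a), the reduction above gives $\tau(E_1Q_1\cdots E_rQ_r)=0$ automatically, even without the hypothesis $\tau(Q_s)=0$. For (b), expand each $E_j$ as a linear combination of matrix units $\sum_\alpha c_\alpha E_{i_\alpha j_\alpha}$; by multilinearity the formula above reduces $\tau'(E_1Q_1\cdots E_rQ_r)$ to a sum of terms each factoring as a product of $\delta$-symbols times $\prod_s\tau(Q_s)$, which vanishes as soon as some $\tau(Q_s)=0$.

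Given Lemmas~\ref{lemma:limit} and~\ref{lemma:limitGaussian}, the theorem becomes a mechanical verification of the four conditions above; the main obstacle is therefore bundled into those earlier lemmas, whose proofs rest on Haar-unitary invariance together with Hilbert--Schmidt concentration in one case, and on the Wick formula plus a genus expansion in the other. A secondary awkward point, outside the scope of Lemma~\ref{lemma:infFreeConditions}, is the existence of $\tau'$ on pure $A$-polynomials, which has to be imported from the specific fine structure of the chosen ensemble.
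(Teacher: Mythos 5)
Your proposal is correct and follows essentially the same route as the paper: verify hypotheses (i)--(ii) of Lemma~\ref{lemma:infFreeConditions} and then use Lemma~\ref{lemma:limit} (resp.\ Lemma~\ref{lemma:limitGaussian}) to establish conditions (a) and (b). You spell out more explicitly than the paper does the matrix-unit collapse $E_{ij}E_{kl}=\delta_{jk}E_{il}$ and the use of traciality to force words into the alternating form $E_1Q_1\cdots E_rQ_r$ required by the lemma, and you correctly flag the residual point, not covered by Lemma~\ref{lemma:infFreeConditions} itself, that the existence of $\tau'$ on pure $A$-polynomials has to be supplied by the particular ensemble (via the Johansson/Dumitriu--Edelman $1/N$-expansion in the Gaussian case, or as an implicit regularity hypothesis in the unitarily invariant case) --- the paper glosses over this.
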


\begin{proof}
It is not hard to see that under the hypothesis of the Theorem, the assumptions (i) and  (ii) of Lemma~\ref{lemma:infFreeConditions} are satisfied.    It thus remains to verify (a) and (b) of Lemma~\ref{lemma:infFreeConditions}.

Assume that  $p$ is a monomial involving at least one term $e_{ij}$. By Lemma~\ref{lemma:limit} or Lemma~\ref{lemma:limitGaussian}, $$\lim_{N\to\infty} \mathbb{E}Tr\big(p(A_1(N),\dots,A_k(N),\{E_{ij}\}_{i,j=1}^{N_0})\big)$$ exists.  Thus $\tau(p)=0$, which gives us condition (a).  

To verify condition (b), we need to check  that if $Q_j \in \operatorname{Alg}(a_1,\dots,a_k)$ so that $\tau(Q_j)=0$ and $E_j $ in the non-unital algebra generated by $e_1,\dots,e_m$, $\tau'(E_1 Q_1 E_2 Q_2 \dots E_r Q_r)=0$.  By linearity, we may assume that each $E_j$ is equal to $E_{s(j),t(j)}$.  But by Lemma~\ref{lemma:limit} or Lemma~\ref{lemma:limitGaussian}, the limit of $$\mathbb{E}Tr(E_1 Q_1 E_2 Q_2 \dots E_r Q_r)$$ is a product of terms involving $\tau(Q_j)$ which are assumed to be zero.
\end{proof}

Because infinitesimal freeness is directly related with freeness we obtain the following:

\begin{theorem} \label{thm:free} Let $A_1(N),\dots,A_k(N)$ be a random matrix ensemble described in \S\ref{sec:notation} or \S\ref{sec:GaussianNotations}. 
Let $\nu_N$ be the joint law of the matrices $( E_{ij}: {1\leq i,j\leq N_0})$ regarded as elements of the non-commutative probability space $(M_{N\times N},\frac{1}{N} Tr)$, and let $\tau$ be the limit law of matrices $A_1(N),\dots,A_k(N)$ as $N\to \infty$.  

Let $(A_1,\dots,A_k) \cup ( e_{ij} :1\leq i,j\leq N_0)\in (B,\phi_N)$ be non-commutative variables so that $(A_1,\dots,A_k)$ have law $\tau$, $(e_{ij}:1\leq i,j\leq N_0)$ have law $\nu_N$, and $(A_1,\dots,A_k)$ are free with respect to $\phi_N$ from $(e_{ij}:1\leq i,j\leq N_0)$ (in other words, $\phi_N = \tau * \nu_N$).   

Then for any non-commutative polynomial $p$ in variables $(a_1,\dots,a_n)\cup (e_{ij}:1\leq i,j\leq N_0)$, $$
\mathbb{E} \frac{1}{N} Tr\left(p (A_1(N),\dots,A_k(N),(E_{ij})_{i,j=1}^{N_0}  )\right) - \phi_N \left(p (A_1,\dots,A_k,(e_{ij})_{i,j=1}^{N_0}  )\right) = o(1/N).
$$ 
\end{theorem}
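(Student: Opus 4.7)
The plan is to deduce this theorem from Theorem~\ref{thm:asymtinffree}, by showing that the algebraically defined state $\phi_N = \tau * \nu_N$ has a $1/N$-expansion matching the one produced by infinitesimal asymptotic freeness on the matrix side. Uniqueness of the infinitesimal free product then forces the two expansions to coincide to order $1/N$.

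Denote $\tau_N(p) := \mathbb{E}\frac{1}{N}\mathrm{Tr}\bigl(p(A_1(N),\dots,A_k(N),\{E_{ij}\})\bigr)$. By Theorem~\ref{thm:asymtinffree} there exist $\tilde\tau,\tilde\tau'$ with $\tau_N = \tilde\tau + \tfrac{1}{N}\tilde\tau' + o(1/N)$ such that $(a_1,\dots,a_k)$ and $(e_{ij})$ are infinitesimally free with respect to $(\tilde\tau,\tilde\tau')$, with marginals $\tilde\tau|_{A} = \tau$, $\tilde\tau|_{E} = \nu := \lim_N \nu_N$, and $\tilde\tau'|_{E} = \nu'$. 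A short direct calculation gives $\nu_N = \nu + \tfrac{1}{N}\nu'$ exactly, where $\nu$ vanishes on non-trivial monomials in the $e_{ij}$ and $\nu'(e_{i_1 j_1}\cdots e_{i_r j_r}) = \mathbf{1}_{j_1 = i_2,\,\dots,\,j_r = i_1}$.

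The crucial step is to produce a parallel expansion of $\phi_N$: I claim $\phi_N = \phi + \tfrac{1}{N}\phi' + O(1/N^2)$, where $(a_j)$ and $(e_{ij})$ are infinitesimally free with respect to $(\phi,\phi')$ with marginals matching those of $(\tilde\tau,\tilde\tau')$. To verify this, take an alternating word $a_1\cdots a_r$ in $A\cup E$ with $\phi(a_j)=0$, and set $\tilde a_j = a_j - \phi_N(a_j)$; for $a_j\in A$ we have $\phi_N(a_j) = \tau(a_j) = 0$, while for $a_j\in E$ we have $\phi_N(a_j) = \nu_N(a_j) = \tfrac{1}{N}\nu'(a_j)$. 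Since $\tilde a_1\cdots\tilde a_r$ is a $\phi_N$-centered alternating word, exact freeness of $\phi_N$ forces $\phi_N(\tilde a_1\cdots\tilde a_r) = 0$. Expanding $a_j = \tilde a_j + \phi_N(a_j)$ and collecting terms, the only $O(1/N)$ contribution to $\phi_N(a_1\cdots a_r)$ comes from replacing exactly one $E$-factor by its scalar mean, and the surviving subproduct is evaluated through the free product $\tau * \nu$ to give $\phi(a_1\cdots\widehat{a_j}\cdots a_r) + O(1/N)$. Summing over $j$ with $a_j \in E$ reproduces precisely the infinitesimal freeness formula $\phi'(a_1\cdots a_r) = \sum_j \phi(a_1\cdots \phi'(a_j) \cdots a_r)$.

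Finally, the infinitesimal freeness condition together with the one-subalgebra marginals uniquely determines $\phi'$ on the joint algebra (this is the content of Lemma~\ref{lemma:infFreeConditions}), so $(\phi,\phi') = (\tilde\tau,\tilde\tau')$ and $\phi_N(p) - \tau_N(p) = o(1/N)$ for every polynomial $p$. The main obstacle is the bookkeeping in the expansion step: one must confirm that subsets of size $\geq 2$ of $E$-factors contribute only $O(1/N^2)$, and that removing a single $E$-factor leaves a product in which the two adjacent $A$-factors (now in the same subalgebra) are correctly absorbed via the free product's centering procedure. This is a standard but somewhat intricate computation, and is the only nontrivial ingredient beyond Theorem~\ref{thm:asymtinffree}.
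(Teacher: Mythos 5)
The paper offers essentially no proof of this theorem: it is presented as an immediate consequence of Theorem~\ref{thm:asymtinffree} via the remark that ``infinitesimal freeness is directly related with freeness,'' so you had to reconstruct the argument. The route you take (expand $\phi_N$ to first order in $1/N$, show that the resulting pair $(\phi,\phi')$ makes the two subalgebras infinitesimally free, then invoke the uniqueness built into Lemma~\ref{lemma:infFreeConditions} to identify $(\phi,\phi')$ with $(\tilde\tau,\tilde\tau')$) is the natural one and is presumably what the author intends. Your ``crucial step'' computation is correct: setting $a_j=\tilde a_j + c_j$ with $c_j=\phi_N(a_j)=O(1/N)$, using exact freeness of $\phi_N$ to kill the term with no scalars pulled out, and observing that pulling out two or more scalars contributes $O(1/N^2)$ does reproduce the infinitesimal-freeness recursion for $\phi'$, once one re-centers the newly adjacent $A$-factors. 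The equivalent $o(t)$-formulation of infinitesimal freeness given in the paper makes this even quicker, since $\phi_N$ differs from $\phi+\tfrac{1}{N}\phi'$ by $O(1/N^2)$ and the $(A_i)$, $(e_{ij})$ are \emph{exactly} free under $\phi_N$.

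There is, however, a gap you should flag. Identifying $(\phi,\phi')$ with $(\tilde\tau,\tilde\tau')$ by uniqueness requires that their marginals agree, and on the $A$-subalgebra this forces $\tilde\tau'\vert_{\operatorname{Alg}(a_1,\dots,a_k)}=0$, because $\phi_N\vert_{\operatorname{Alg}(a_1,\dots,a_k)}=\tau$ is $N$-independent by construction, so $\phi'\vert_{\operatorname{Alg}(a_1,\dots,a_k)}=0$. You assert matching marginals but do not verify this, and it is not automatic from the hypotheses of \S\ref{sec:notation} or \S\ref{sec:GaussianNotations}: it holds for GUE (where $\tau_N-\tau=O(1/N^2)$), but fails for GOE, where the paper itself records a nonzero correction $\sigma$ to the semicircle law; indeed $\tfrac{1}{N}\mathbb{E}\,\mathrm{Tr}(A_N^2)=\sigma^2(1+1/N)$ for a GOE matrix, so already for $p=a_1^2$ the displayed estimate would fail. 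This defect is inherited from the theorem statement itself, which requires the model variables $(A_1,\dots,A_k)$ to have law $\tau$ rather than $\tau+\tfrac{1}{N}\tilde\tau'\vert_{\operatorname{Alg}(a_1,\dots,a_k)}$; a complete write-up should either add the hypothesis $\tilde\tau'\vert_{\operatorname{Alg}(a_1,\dots,a_k)}=0$ (as holds in the unitarily-invariant and GUE cases treated in the examples), or adjust the law of the model $(A_1,\dots,A_k)$ so that the $A$-marginals of $(\phi,\phi')$ and $(\tilde\tau,\tilde\tau')$ agree by construction.
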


This theorem shows that the $1/N$ correction to the limit law of $A_1,\dots,A_k$ occasioned by perturbing them in an arbitrary way by  finite-rank matrices $E_{ij}$ can be computed purely in terms of free probability: the correction to order $1/N$ is the same if we were perturbing the matrices $A_1,\dots,A_k$ by {\em freely independent} matrices $e_{ij}$ viewed in as elements of the space of $N\times N$ matrices by padding them with zeros.  This result in a way explains the occurrence of free probability machinery (such as subordination functions) in the analysis of outliers in the spectra of spiked matrices, see e.g. \cite{spikes,spikes4}.

For an $N\times N$ self-adjoint random matrix $A$ let us denote by $\eta^A$ the empirical spectral measure $\mathbb{E} \frac{1}{N} \sum_j \delta_{\lambda_j}$ where $\lambda_1,\dots,\lambda_N$ are the eigenvalues of $A$.  Let  $F$ be a fixed self-adjoint operator of rank $N_0$ and eigenvalues $\lambda_1,\dots,\lambda_{N_0}$.  Let $\nu_0 = N_0^{-1} \sum \delta_{\lambda_j}$.

\begin{corollary}
Let $A(N)$ be a random matrix belonging to one of the ensembles described in  \S\ref{sec:notation} or \S\ref{sec:GaussianNotations} and let $\eta^A=\lim_{N\to\infty} \eta^A_N$. Then with the above notations, 
\begin{eqnarray*}
\eta_N^{A+F}  &=&  \mu \boxplus \left( \frac {N_0}{N} \nu_0 - \frac{N-N_0}{N} \delta_0\right)  + o(1/N).
\end{eqnarray*}
\end{corollary}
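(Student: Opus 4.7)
The plan is to derive the corollary directly from Theorem~\ref{thm:free}, tested against polynomial moments. By the unitary invariance of both ensembles in \S\ref{sec:notation} and \S\ref{sec:GaussianNotations}, the distribution of $A(N)+F$ depends only on the eigenvalues of $F$, so after conjugating the ambient basis I may assume that $F$ is supported in the upper-left $N_0\times N_0$ block. Writing $F=\sum_{i,j=1}^{N_0}F_{ij}\,E_{ij}$ places $F$ inside the algebra generated by the matrix units appearing in Theorem~\ref{thm:asymtinffree}.

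For each integer $n\geq 0$ I then have
$$
\int t^n\,d\eta_N^{A+F}(t)=\mathbb{E}\frac{1}{N}\operatorname{Tr}\bigl(p_n(A(N),\{E_{ij}\})\bigr),\qquad p_n(a,\{e_{ij}\})=\Bigl(a+\sum_{i,j}F_{ij}\,e_{ij}\Bigr)^{n}.
$$
Theorem~\ref{thm:free} then gives
$$
\int t^n\,d\eta_N^{A+F}(t)=\phi_N\bigl(p_n(A,\{e_{ij}\})\bigr)+o(1/N),
$$
with $A$ of distribution $\eta^A$, the $(e_{ij})$ of joint law $\nu_N$, and $A$ free from $(e_{ij})$ under $\phi_N$.

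To identify the right-hand side, set $f=\sum_{i,j}F_{ij}\,e_{ij}$. By construction of $\nu_N$, the restriction of $\phi_N$ to the unital algebra generated by $(e_{ij})$ agrees, under $e_{ij}\mapsto E_{ij}$, with the normalized trace $\frac{1}{N}\operatorname{Tr}$ on the copy of $M_{N_0}(\mathbb{C})$ sitting as the top-left corner of $M_N(\mathbb{C})$. Hence $f$ has $\phi_N$-distribution equal to the spectral measure of $F$ inside $\bigl(M_N(\mathbb{C}),\frac{1}{N}\operatorname{Tr}\bigr)$, which is $\eta^F_N=\frac{N_0}{N}\nu_0+\frac{N-N_0}{N}\delta_0$. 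Since $A$ and $f$ are free with respective laws $\eta^A$ and $\eta^F_N$, their sum $A+f$ has law $\eta^A\boxplus\eta^F_N$, and for every $n$,
$$
\int t^n\,d\eta_N^{A+F}(t)=\int t^n\,d\bigl(\eta^A\boxplus\eta^F_N\bigr)(t)+o(1/N),
$$
which is the corollary read moment-wise.

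The entire analytic content sits in Theorem~\ref{thm:free}; the only delicate point is the identification of the $\phi_N$-law of $f$. In particular, it is essential to use the \emph{unital} algebra generated by $(e_{ij})$ rather than the non-unital one, so that the $N-N_0$ zero eigenvalues of $F$ contribute the $\frac{N-N_0}{N}\delta_0$ term via $\phi_N\bigl(1-\sum_{j=1}^{N_0}e_{jj}\bigr)=\frac{N-N_0}{N}$.
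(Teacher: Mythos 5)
Your proof is correct and is precisely the specialization of Theorem~\ref{thm:free} that the paper leaves implicit (the corollary is stated with no separate argument). One small point worth making explicit: the moment-by-moment comparison yields the measure-valued claim because all measures involved are uniformly compactly supported, so the $o(1/N)$ estimate against any fixed polynomial (or smooth compactly supported test function) follows from the moment estimates. More importantly, your derivation produces
$$\eta_N^{A+F} \;=\; \eta^A \boxplus \Big(\tfrac{N_0}{N}\,\nu_0 + \tfrac{N-N_0}{N}\,\delta_0\Big) + o(1/N),$$
which exposes two typographical slips in the printed corollary: the minus sign there would make the second argument of $\boxplus$ a signed measure of total mass $(2N_0-N)/N$ rather than a probability measure, and ``$\mu$'' should read $\eta^A$ (equivalently $\tau$). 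Your sign is the one consistent with the introduction's rank-one special case $\eta^C_N = \eta^A \boxplus \big(\delta_0 + \tfrac{1}{N}(\delta_\lambda-\delta_0)\big) + o(1/N)$. The closing remark about working in the \emph{unital} algebra generated by the $e_{ij}$, so that $1-\sum_{j\le N_0} e_{jj}$ carries mass $\tfrac{N-N_0}{N}$ at $0$, is exactly the right precaution when identifying the $\phi_N$-law of $f$.
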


\section{Some computations: spiked additive and multiplicative perturbations.}
Let us now fix deterministic self-adjoint matrices $\Lambda(N)$ with eigenvalues $\lambda_1(N)\leq \dots \leq \lambda_N(N)$.  We assume that $\sup_{i,N} |\lambda_i (N)| <\infty$ and that $\mu_N = N^{-1} \sum \delta_{\lambda_j(N)}$ converge weakly to a measure $\mu$.  Depending on our context, we will either put $A_N = U\Lambda(N)U^*$, where $U$ is  Haar-distributed random unitary, or assume that $A_N$ is a real or complex self-adjoint Gaussian matrix.  Let us also put $B = \sum_{j=1}^{N_{0}} \theta_j E_{jj}$.  

\subsection{Additive case.}
Since $A_N$ and $B$ are asymptotically infinitesimally free, we can say that the spectral measure $\eta_N$ of $A_N + B$ will satisfy
$$
\eta_N = \eta + \frac{1}{N} \eta' + o(N^{-1})
$$
where $\eta$ and $\eta'$ can be computed using type B free convolution \cite[Proposition 20]{belinschi-shlyakht:typeB} (the formula for type B convolution can be easily obtained from the usual subordination formulation of free convolution of two one-parameter families of measures, differentiating in that parameter).  More precisely, we have that the type B law of $A(N)$ is given by $(\mu_1,\mu_1') := (\mu,0)$ and the type B law of $B$ is given by $(\mu_2,\mu_2') := (0, \sum_{j=1}^{N_0} \delta_{\theta_j} - N_0 \delta_0)$.  
Note that $\mu_2' = \partial_t h_2(t)$  where $h_2(t) = \sum_j  \chi[0,\theta_j]$ and the derivative is taken in distribution sense.  Thus from \cite{belinschi-shlyakht:typeB} we see that 
$$ (\eta,\eta') = (\mu,0) \boxplus_B (\delta_0,\sum_{j=1}^{N_0} \delta_{\theta_j} - N_0 \delta_0).$$   

We immediately get that $\eta = \mu$ and that the Cauchy transform of $\eta'$, $g_{\eta'}$ is given by 
$$g_{\eta'}(z) = F'_\mu(z) \left( \sum_{j=1}^{N_0}\frac{1}{F_\mu(z)-\theta_j} - {N_0}G_\mu(z) \right),$$
where $G_\mu(z)$ is the Cauchy transform of $\mu$ and $F_\mu(z) = 1 / G_\mu(z)$.  Moreover, by \cite{belinschi-shlyakht:typeB} we know that $\eta'$ belongs to the distribution space $\mathcal{M}_2$ (cf. \cite{belinschi-shlyakht:typeB}), and in particular is the derivative of a function $h$ which is itself a difference of two monotone functions.

As in \cite{belinschi-shlyakht:typeB} we write
$$
g_{\eta'}(z) = \int \frac{1}{z-t} d\eta'(t) = \partial_z \int \log (z-t) d\eta'(t) = \partial_z \int \frac{1}{z-t} h(t) dz.
$$

We now note that $$F'_\mu(z) \left( \sum_{j=1}^{N_0}\frac{1}{F_\mu(z)-\theta_j} - {N_0}G_\mu(z) \right) = 
\partial_z \int \frac{1}{F_\mu(z)-t} h_2(t)dt$$ which as in \cite{belinschi-shlyakht:typeB} gives us   $$
\int \frac{1}{z-t} h(t)dt = \int \frac{1}{F_\mu(z)-t} h_2(t)dt$$ so that  $$\eta' = \partial_t h.$$
Using the expression we have for $h_2(t)$, we get  $$\int \frac{1}{z-t} h(t) dt = \int \frac{1}{F_\mu(z)-t} \chi_{[0,\theta_j]}
=\sum_j \log \left(1 - \theta_j G_\mu(z)\right).$$

Let us now consider several cases.

\subsubsection{Case when $\mu$ is the semicircle law.}  In this case, $G_\mu(z) = z - \sqrt{z^2-2}$.  The function $G_\mu(z)$ can be analytically extended to the subset of the real line consisting of the interval $(-\sqrt{2},\sqrt{2})$ as well as the complement of its closure:
$$
G_\mu(t) = \begin{cases} 
t - \sqrt{t^2-2}, & |t|>\sqrt{2} \\
t -i \sqrt{2-t^2}, & |t| < \sqrt{2}.
\end{cases}
$$
Note that the equation $G_\mu(\theta'_j) = 1/\theta_j$ only has (a unique) real solution $\theta'_j$ when $|\theta_j| \geq 1/\sqrt{2}$.  Moreover, $|\theta'_j|\geq \sqrt{2}$.  We will assume that $|\theta_j|\neq 1/\sqrt{2}$ for any $j$ for simplicity.

We now recover $h$ using a kind of Stieltjes inversion formula (which, as was mentioned in \cite{belinschi-shlyakht:typeB}, applies in this case) by considering the limit
$$L(t) = -\lim_{s\downarrow 0} \frac{1}{\pi} \sum_j \operatorname{Im} \log (1-\theta_j G_\mu(is+t)) = - \lim_{s\downarrow 0} \frac{1}{\pi} \sum_j \operatorname{Arg} (1-\theta_j G_\mu(is+t)). $$

Thus if $t\in ( -\sqrt{2},\sqrt{2})$, then $$L(t) = -\frac{1}{\pi} \sum_j \operatorname{Arg}\left( 1 - \theta_j (t - i\sqrt{2-t^2})\right).
$$ 

If $|t|>\sqrt{2}$, then unless $t=\theta'_j$ for some $j$,  $\operatorname{Arg} (1-\theta_j G_\mu(is+t))$ converges either to $0$ or $\pi$, depending on the sign of the limit $1-\theta_j G_\mu(is+t)$. 

It follows that $h(t) = \sum_j h^{(j)} (t)$ where $h^{(j)}(t) = a_j(t) + b_j(t)$ where $a_j$ and $b_j$ are determined as follows. 
\begin{itemize}
\item[(a)] $|\theta_j|<1/\sqrt{2}$: In this case, $$a_j(t) =  -\frac{1}{\pi} \sum_j \operatorname{Arg}\left( 1 - \theta_j (t - i\sqrt{2-t^2})\right)  \chi_{[-\sqrt{2},\sqrt{2}]},\qquad b_j=0.$$ In this case $a_j(t)$ is monotone decreasing for $-\sqrt{2} < t< 1/\theta_j$ and is increasing for $1/\theta_j < t < \sqrt{2} $. 
 
 \item[(b)] $|\theta_j| > 1/\sqrt{2}$:  In this case,
 $$a_j(t) =  -\frac{1}{\pi} \sum_j \operatorname{Arg}\left( 1 - \theta_j (t - i\sqrt{2-t^2})\right)  \chi_{[-\sqrt{2},\sqrt{2}]},\qquad b_j(t) = \begin{cases} -1, & t<\theta'_j \\ 0, & t> \theta'_j. \end{cases}$$   In this case $a_j(t)$ is monotone decreasing on $(-\sqrt{2},\sqrt{2})$ and $b_j(t)$ is a monotone increasing function.
 \end{itemize}
 Furthermore, the function $\operatorname{Arg}\left( 1 - \theta_j (t - i \sqrt{2-t^2}) \right)$ has a branch cut at $t=\theta_j'$ if $|\theta_j|>1/\sqrt{2}$. 
 
 From this we can deduce the ``$1/N$ correction'' to the limit law of $A_N+B$.  It is given by the addition of \begin{equation}
 \label{eq:correction} \frac{1}{N} \left( \sum_{j:|\theta_j|>1/\sqrt{2}} \delta_{\theta'_j}  - \sum_{j} \hat{\nu}_j\right),\end{equation}
where 
\begin{equation}\label{eq:nuHat}\hat{\nu}_j = \frac{ \theta_j (t-2\theta_j) 
}
{
(2\theta_j(t-\theta_j) -1) \sqrt{2-t^2}
} \chi_{[-\sqrt{2},\sqrt{2}]}dt\end{equation}
is a probability measure if $\theta_j > 1/\sqrt{2}$ and is a  signed measure of total mass zero otherwise (in the latter case, $\nu_j$ is the difference of two positive measures supported on $[-\sqrt{2},2\theta_j]$ and $[2\theta_j,\sqrt{2}]$, respectively).

\subsubsection{GUE matrices.} Exactly the same computation works if we replace the matrix $A_N$ by a random Gaussian Hermitian matrix; indeed, because the entries of $A_N$ are complex, we have $$\frac{1}{N} \mathbb{E} Tr(p(A_N)) = \tau(p) + O(1/N^2)$$ where $\tau$ is the semicircle law (cf. \cite{Johanssen,Edelmann}).

Figure~\ref{figure:Theta4} presents the results of numerical simulations. 
 \begin{figure} 
 \begin{tabular}{cc}
 \includegraphics[width=0.4\textwidth]{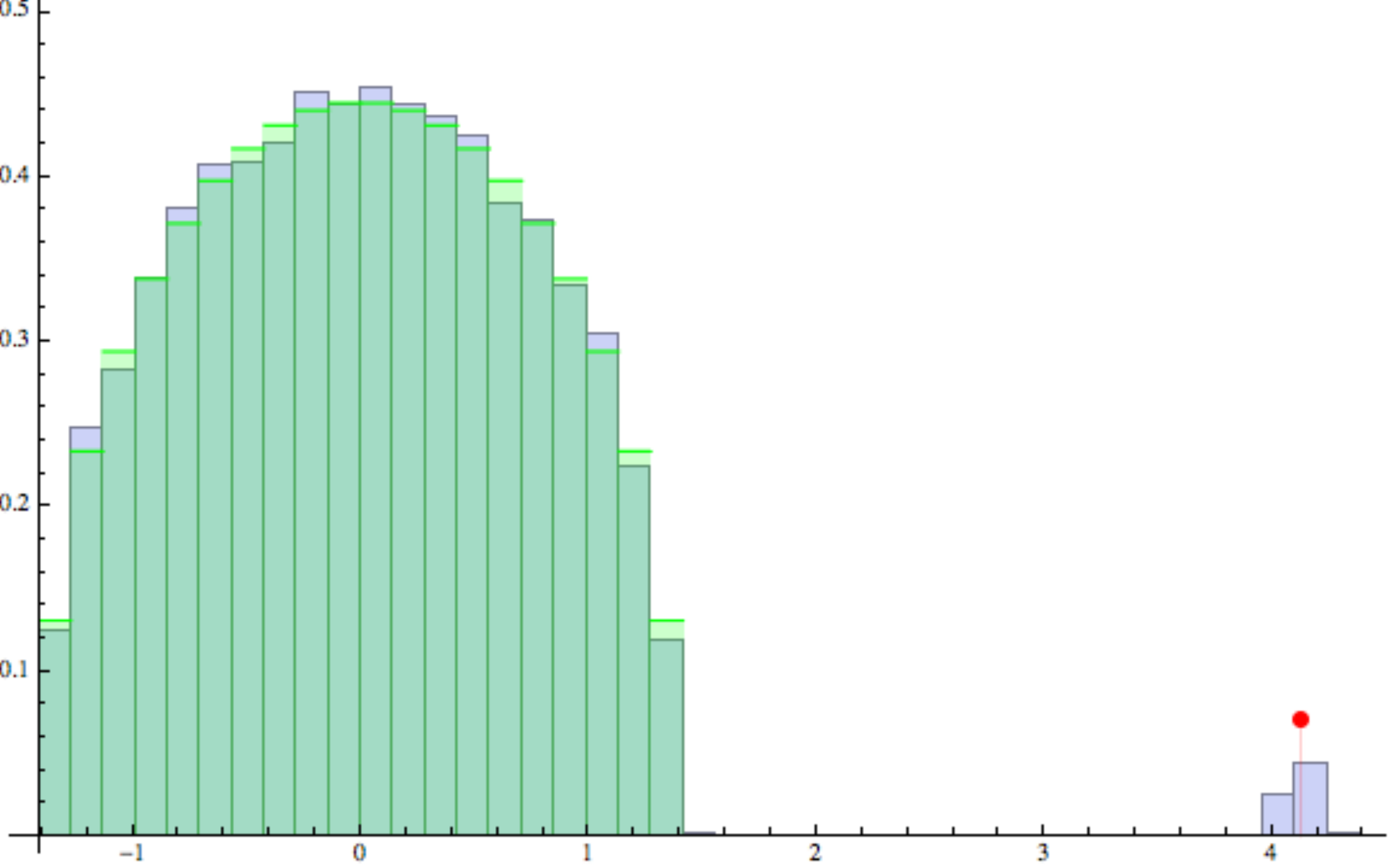} & 
  \includegraphics[width=0.4\textwidth]{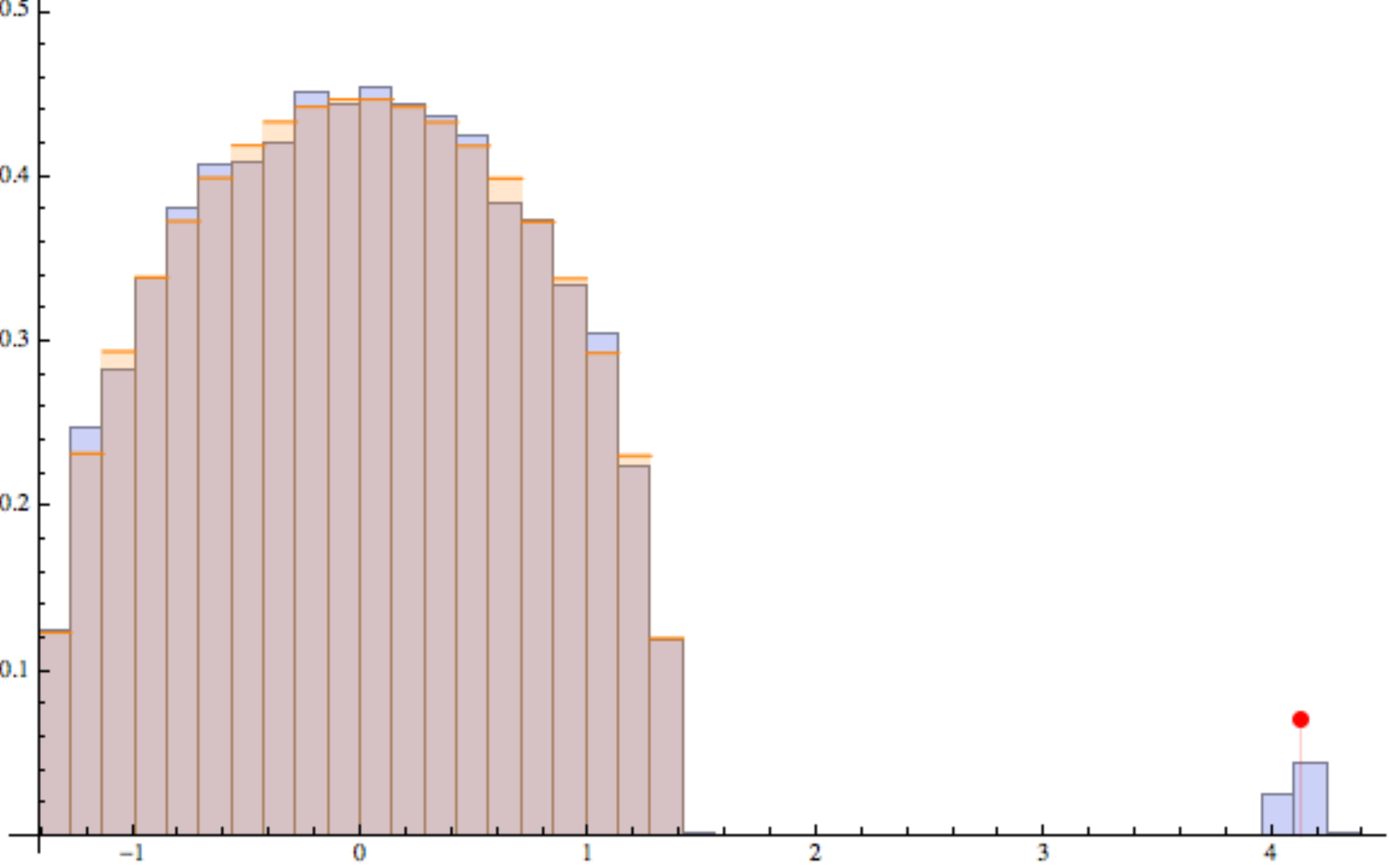}\\
  (1a) & (1b) \\
  \includegraphics[width=0.4\textwidth]{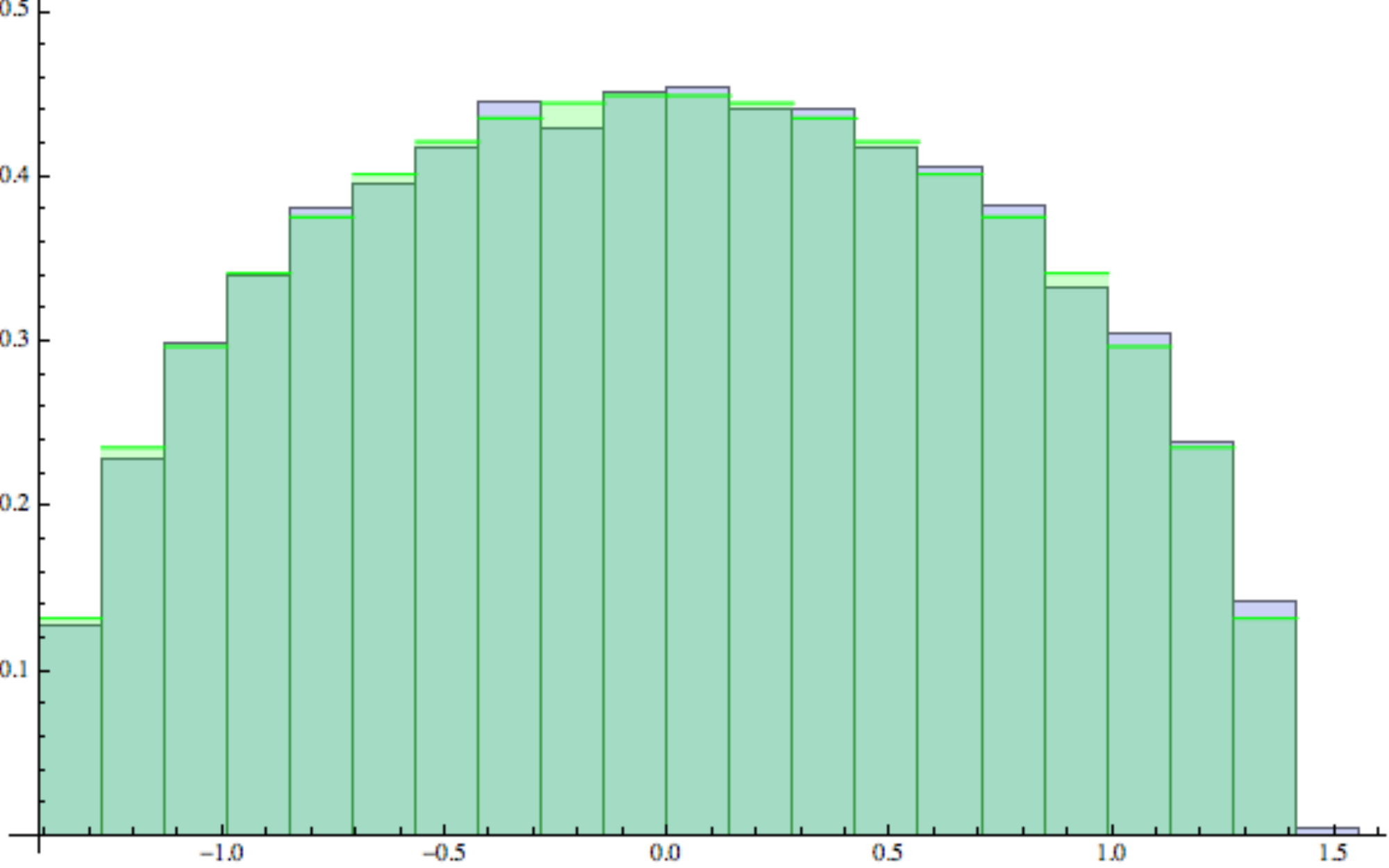} &  
  \includegraphics[width=0.4\textwidth]{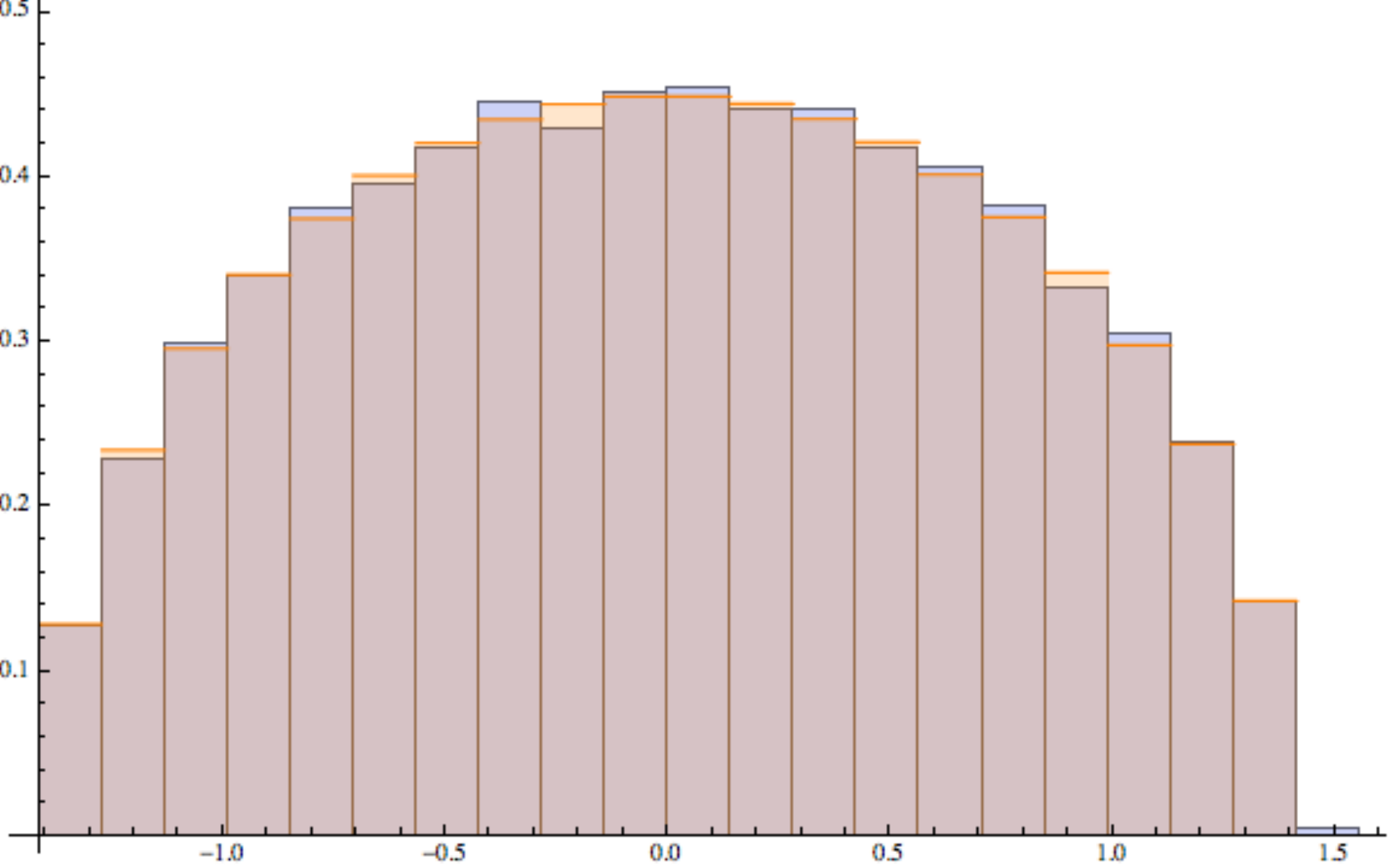} \\
  (1c) & (1d) 
\end{tabular}
 \caption{\label{figure:Theta4} In this simulation $40$ complex Gaussian $100\times 100$ random matrices were chosen at random; each was perturbed by a deterministic rank $1$ matrix with eigenvalue $4$ (in figures 1a, 1b) and $0.4$ (in figures 2a, 2b).  Eigenvalue distributions of the resulting matrices were averaged.  Green represents  predicted eigenvalue density assuming the semicircle law (rescaled by $0.99$ to account for the loss of one eigenvector in figure 1a).  Orange represents predicted eigenvalue density using \eqref{eq:correction}.  Note that our correction \eqref{eq:nuHat} has most of its mass near the edges of the continuous spectrum, giving an improved prediction for the number of eigenvalues near those edges. }
\end{figure}

\subsubsection{GOE matrices}  Let us now consider the case that $A_N$ is a randomly chosen symmetric real matrix.  This time the type B law of $A_N$ is given by  $(\mu_1,\mu_1') = (\mu, \sigma)$ where $\mu$ is the semicircle law and $$\sigma = \frac{1}{4}(\delta_{\sqrt{2}} + \delta_{-\sqrt{2}}) - \frac{1}{2\pi \sqrt{2-t^2}}\chi_{[-\sqrt{2},\sqrt{2}]}(t)dt$$ (cf.  \cite{Johanssen,Edelmann}, case $\beta=1$).  
The Cauchy transforms are:
\begin{eqnarray*}
G_\mu(z) &=& z - \sqrt{2-z^2}\\
g_\sigma(z) &=& \frac{1}{4 (z-\sqrt{2})} + \frac{1}{4(z+\sqrt{2})} -\frac{1}{2\sqrt{z^2-2}}.
\end{eqnarray*}
We therefore obtain that the law $\eta_N$ of $A_N+B$ will satisfy
$$\eta_N = \eta + \frac{1}{N} \eta' + o(N^{-1})$$ where 
$$(\eta, \eta') = (\mu,\sigma) \boxplus_B (\delta_0, \sum_{j=1}^{N_0} \delta_{\theta_j} - N\delta_0).
$$
From this we get immediately that $\eta$ is again the semicircle law, and $$
g_{\eta'}(z) = F'_\mu(z) \left(\sum_{j=1}^{N_0} \frac{1}{F_\mu(z) - \theta_j} - N_0 G\mu(z)\right) + 
g_\sigma(z).$$  
Repeating much the same analysis as before, we get that the ``$1/N$ correction'' to the limit law of $A_N+B$:  it is given by the addition of $$\frac{1}{N} \left[\left( \sum_{j:|\theta_j|>1/\sqrt{2}} \delta_{\theta'_j}  - \sum_{j} \hat{\nu}_j\right) + \frac{1}{4}\left(\delta_{-\sqrt{2}}+\delta_{\sqrt{2}}\right) -\frac{1}{2\pi \sqrt{2-t^2}}\chi_{[-\sqrt{2},\sqrt{2}]}(t) dt\right],$$ where as before  $\theta'_j$ is the solution to  $G_\mu(\theta'_j) = 1/\theta_j$ if $|\theta_j| > 1/\sqrt{2}$ and $\hat{\nu}_j$ is given by \eqref{eq:nuHat}.

\subsubsection{Case when $\mu$ is a discrete measure.}

Let us assume now that $\mu = \frac{1}{K} \sum_{i=1}^K \delta_{\lambda_i}.$  In this case we can once again extend $G_\mu$ to the complement of the set $\{\lambda_j : 1\leq j\leq K\}$ on the real axis by the formula $$G_\mu(t) = \frac{1}{K} \sum_{i=1}^K \frac{1}{t-\lambda_j}.$$  The equation $G_\mu(\theta') = 1/\theta$ may now have multiple solutions.  
We are once again led to consider $$
L(t)=\lim_{s\downarrow 0} \frac{1}{\pi} \sum_j \operatorname{Arg} (1-\theta_j G_\mu(is+t)).
$$
Since $G_\mu(z)$ becomes real when $\operatorname{Im}(z)\to 0$, it follows that $L(t)$ is either $0$ or $-\pi$, depending on the sign of $1-\theta_j G_\mu(t)$.  As $t$ runs from $-\infty$ to $\infty$, this sign changes each time we either go through a pole of $G_\mu(z)$ (i.e., $t=\lambda_i$ for some $i$) or if $t=\theta'$, where $\theta'$ solves $G_\mu(\theta') = 1/\theta$. 

It follows that the law of $A_N+B$ is approximately given by

$$\frac{N-\nu(1)}{N} \mu + \frac{1}{N} \nu$$
where $\nu$ is the measure putting mass $1$ on each  solution $\theta'$ to $G_\mu(\theta') = 1/\theta$.

\subsection{Multiplicative case.}

Let us now assume that $C  = \Sigma B^* B \Sigma$ where $B$ is an $N\times p$ non-selfadjoint complex Gaussian random matrix with iid entries of variance $2/\sqrt{N}$ and $\Sigma$ is a deterministic matrix of size $N\times N$ having $N_0$ eigenvalues $0<\theta_1^{1/2}\leq \theta_2^{1/2} \dots \leq \theta_{N_0}^{1/2}$ and having the rest of the eigenvalues equal to $1$.  We assume that $p/N\to \lambda \in (0,+\infty)$ as $N\to\infty$.  One can again check, as we did with self-adjoint Gaussian random matrices that $B^*B$ and $\Sigma$ are asymptotically infinitesimally free.  We can again use type B convolution to compute the law of $C$ to order higher than $1/N$.  

Let $\eta^C_N$ be the empirical spectral measure of $C$ and write $\eta^C_N = \eta + \frac{1}{N} \eta' + o(1/N)$.  Let $\mu$ be the limiting eigenvalue distribution of $B^*B$; it is the free Poisson (also called Marchenko-Pastur) law of parameter $\lambda$.  The type B laws of $B^*B$ and $\Sigma^2$ are given by, respectively, by:
$$ (\mu_1,\nu_1) = (\mu, 0) \qquad\textrm{and}\qquad (\mu_2,\nu_2) = (\delta_1, \sum_{j=1}^{N_0} (\delta_{\theta_j} - \delta_1)).$$

Because of infinitesimal freeness, we get that $$(\eta, \eta') = (\mu_1,\nu_1)\boxplus_B (\mu_2,\nu_2).$$

Let us denote by $\psi$ the $\psi$-transform, related to the Cauchy transform: $$\psi_\nu = \int \frac{tz}{1-tz} d\nu(t) = \frac{1}{z} G_\nu \left(\frac{1}{z}\right) -1.$$

Then $$\psi_{\mu_2} = \frac{z}{1-z},\qquad \psi_{\nu_1} = 0.$$

The appropriate analogs of subordination functions for multiplicative convolution (see e.g. the last section of \cite{belinschi-shlyakht:typeB}) satisfy: $$\omega_1(z) = z, \qquad \frac{\omega_2(z)}{1-\omega_2(z)} = \psi_\mu(z).$$

By  \cite{belinschi-shlyakht:typeB}  we immediately get: $$\eta = \mu_1\boxplus \mu_2 = \mu$$ and 
\begin{eqnarray*}
\frac{\psi_{\eta'}(z)}{z} &=& \frac{\psi_{\nu_2}(\omega_2(z))}{\omega_2(z)} \omega'(z) =\int \frac{t}{1-\omega_2(z) t}\omega_2'(z) d\nu_2(t) 
=  -\partial_z \int \log (1 - \omega_2(z)t) d\nu_2(t) \\
&=& -\partial_z \sum_j \log \left( \frac{ 1 - \omega_2(z) \theta_j}{1-\omega_2(z)} \right) = -\partial_z \sum_j \log \left( 1 + \psi_\mu (z) (1-\theta_j )\right). 
\end{eqnarray*}
The left-hand size is the same as $$\int \frac{t}{1-zt} d\eta'(t),$$ so that if we suppose that (in the sense of distributions) $d\eta'(t) = h'(t) dt$, we get that the left hand side is $$\frac{\psi_{\eta'}(z)}{z}  = \int \frac{t}{1-zt} h'(t) dt =  \partial_z \int \frac{z}{1-zt} h(t) dt.$$ From this we get:$$
\int \frac{1}{1/z-t} h(t) dt =- \sum_j \log \left( 1 + \psi_\mu (z) (1- \theta_j )\right).
$$
Substituting $1/z$ for $z$ we get
\begin{eqnarray*}
G_{\eta'}(z) &=& \int \frac{1}{t-z} h(t)dt =  \sum_j \log \left(1+\psi_\mu(1/z) (1-\theta_j)\right) 
\\ &=&  \sum_j \log \left(1 + \psi_\mu(1/z) (1-\theta_j)\right).
\end{eqnarray*}
Once again, we can recover $h$ using a kind of Stietjes inversion formula, involving 
\begin{eqnarray*}
L(t) &=& -\lim_{s\downarrow 0} \frac{1}{\pi} \sum_j \operatorname{Im} \log \left(1 + \psi_\mu(1/z) (1-\theta_j)\right) \\ &=& - \lim_{s\downarrow 0} \frac{1}{\pi} \sum_j \operatorname{Arg} \left(1 + \psi_\mu(1/(s+it))(1-\theta_j)\right). 
\end{eqnarray*}
For $\mu$ the free Poisson law of parameter $\lambda$, $\phi_\mu(1/z)=z G_\mu(z)-1$ can be continued analytically to the real axis excluding the set $\{ (1-\sqrt{\lambda})^2, (1+\sqrt{\lambda})^2\}$:
$$
\psi_\mu(1/t)  = \frac{1}{2}\begin{cases} 
-1-\lambda +t - \sqrt{(1+\lambda -t)^2 -4\lambda}, & t\notin [(1-\sqrt{\lambda})^2, (1+\sqrt{\lambda})^2];
 \\[4pt]
-1-\lambda +t - i\sqrt{4\lambda - (1+\lambda -t)^2 }, & \textrm{otherwise}.
\end{cases}
$$
The analysis is now similar to the additive case, so we only provide a brief sketch.

If $t$ is outside the interval  $[(1-\sqrt{\lambda})^2, (1+\sqrt{\lambda})^2]$, then $L(t)$ is either $0$ or $\pi$ depending on the sign of $1 + \psi_\mu(1/t) (\theta_j-1)$, which changes when $t=\theta_j'$ is a solution to $ 1 + \psi_\mu(1/z) (1-\theta_j) = 0$.  This equation is equivalent to $$\frac{1 -\omega_2(1/\theta'_j) \theta_j } { 1-\omega_2(1/z)}  =0$$ which in turn is equivalent to $$\omega_2(1/\theta'_j) = 1/\theta_j$$ (compare Theorem 2.3 of \cite{spikes}).  

It follows that the $1/N$ correction $\eta'$ to $\eta$ consists of a certain (possibly signed) measure supported on $[(1-\sqrt{\lambda})^2, (1+\sqrt{\lambda})^2]$ (whose density is the derivative of $L(t)$), together with point masses at each solution to $$\omega_2(1/\theta'_j) = 1/\theta_j$$
in accordance with the results of \cite{spikes1,spikes4,spikes}.

\end{document}